\newtheorem{theo}{Theorem}
\newtheorem{ex}[theo]{Example}
\newtheorem{lem}[theo]{Lemma}
\newtheorem{cor}[theo]{Corollary}
\newtheorem{prop}[theo]{Proposition}
\newtheorem{rem}[theo]{Remark}
\newcommand{\PF}{\mathrm{PF}}
\newcommand{\SG}{\mathrm{SG}}
\newcommand{\g}{\mathrm{g}}
\newcommand{\F}{\mathrm{F}}
\newcommand{\G}{\mathrm{G}}
\newcommand{\m}{\mathrm{m}}
\newcommand{\Ap}{\mathrm{Ap}}
\newcommand{\Z}{\mathbb{Z}}
\newcommand{\N}{\mathbb{N}}
\newcommand{\K}{\mathbb{K}}
\newcommand{\sem}{\mathcal{S}}
\def\Maximals{{\rm Maximals}}
\def\Minimals{{\rm Minimals}_{\subseteq}}
\newcommand{\dsum}{\displaystyle\sum}
\title{$m$-irreducible numerical semigroups}
\author[V. Blanco and J.C. Rosales]{V. Blanco$^\dag$ and J.C. Rosales$^\ddag$\bigskip\\
\lowercase{\texttt{vblanco@ugr.es}\; ; \;\texttt{jrosales@ugr.es}}}
\address{Departamento de \'Algebra. Universidad de Granada}
\email{$^\dag$vblanco@ugr.es; $^\ddag$jrosales@ugr.es}
\keywords{numerical semigroup, irreducible numerical
semigroup, multiplicity, Frobenius number}
\subjclass[2010]{20M14, 13H10}
\thanks{$\dag$The first author has been supported by the Juan de la Cierva program (Ref. JCI-2009-03896) and project MTM2007-67433-C02-01\\$\ddag$The second author has been supported by the project MTM2007-62346}
\date{\today}
\begin{document}
\maketitle
\begin{abstract}
In this paper we introduce the notion of $m$-irreducibility that extends the standard concept of irreducibility of a numerical semigroup when the multiplicity is fixed. We  analyze the structure of
the set of $m$-irreducible numerical semigroups, we give some properties of these numerical semigroups and we present algorithms to compute the decomposition of a numerical semigroups with multiplicity $m$ into $m$-irreducible numerical semigroups.

\end{abstract}

\section{Introduction}

A numerical semigroup is a subset $S$ of $\N$ (here $\N$ denotes the set of nonnegative integers) closed under
addition, containing zero and such that $\N\backslash S$ is finite.

A numerical semigroup is \textit{irreducible} if it cannot be expressed as an intersection of two numerical semigroups containing it properly. This notion was introduced in \cite{pacific} where
it is also shown that the family of irreducible numerical semigroups is the union of two families of numerical semigroups with special importance in this theory: symmetric and pseudo-symmetric numerical semigroups.

Every numerical semigroup can be expressed as a finite intersection of irreducible numerical semigroups. In \cite{forum}, the authors give an algorithm that allows to compute, for a numerical semigroup $S$, a finite set of irreducible numerical semigroups, $S_1, \ldots, S_n$, such that $S=S_1 \cap \cdots \cap S_n$. Furthermore, the above decomposition, given
by that algorithm, is minimal in the sense that $n$ is the smallest possible positive integer with that property. In \cite{belga}, upper and lower bounds are given for that $n$ in terms of $S$.

If $S$ is a numerical semigroup, the least positive integer belonging to $S$ is called the \textit{multiplicity} of $S$ and we denote it by $\m(S)$.

Let $m$ be a positive integer and $\mathcal{S}(m)$ the set of numerical semigroups with multiplicity $m$. It is clear that $(\sem(m), \cap)$ is a semilattice, that is, it is a semigroup where
all its elements are idempotent. The search of the generators of this semilattice leads us to the following definition. An element $S$ in $\sem(m)$ is $m$-irreducible if it cannot be
expressed as the intersection of two elements in $\sem(m)$ containing it properly. In Section \ref{sec:2} we prove that every element in $\sem(m)$ can be expressed as a finite intersection of $m$-irreducible numerical semigroups. Furthermore, we prove that
 $\{S: S \text{ is a $m$-irreducible numerical semigroup}\}$ coincides with $\{\{x\in \N: x\ge m\} \cup \{0\}\} \cup \{\{x\in \N: x\ge m \text{ and } x\neq i\} \cup \{0\}: i=m+1, \ldots, 2m-1\} \cup \{S: S \text{ is an irreducible numerical semigroup and } \m(S)=m\}$.

If $S$ is a numerical semigroup, the largest integer not belonging to $S$ is called the \textit{Frobenius number} of $S$ and we denote it by $\F(S)$. We say that a positive integer, $x$, is a \textit{gap} of $S$ if $x\not\in S$. We denote by $\G(S)$ the set of all the gaps of $S$. The
cardinal of $\G(S)$ is called the \textit{genus} of $S$, and we denote it by $\g(S)$. We can find in the literature several characterization for irreducible numerical semigroups. In \cite{springer}, the authors state that
the irreducible numerical semigroups are those with minimum genus among all the numerical semigroups with the same Frobenius number. In Section \ref{sec:3} we prove that this characterization is also valid for $m$-irreducible numerical semigroups. In fact, we prove that
 an element in $\sem(m)$ is $m$-irreducible if and only if it has minimum genus in the set of all the elements in $\sem(m)$ with the same Frobenius number.

Given a numerical semigroup $S$ with multiplicity $m$, we denote by $\mathcal{O}_m(S) = \{S' \in \sem(m): S \subseteq S'\}$ the set of \textit{oversemigroups} with multiplicity $m$ of $S$. The aim of Section
\ref{sec:4} is to provide an algorithm to compute $\mathcal{O}_m(S)$ when $S$ is given. These results are used in Section \ref{sec:5} to give an algorithm to compute a decomposition, as an intersection of $m$-irreducible numerical semigroups, of a numerical semigroup with
 multiplicity $m$. Moreover, the decomposition given by the algorithm is minimal in the sense that it uses the smallest number of $m$-irreducible numerical semigroups for taking part
of the decomposition.

For concluding this introduction, observe that although this work is performed by a ``semigroupist'' point of view, it can be used in Commutative Ring Theory. In fact, let $M$ be a submonoid of $(\N, +)$, $\K$ a field, and
$\K[[t]]$ the ring of formal power series over $\K$. It is well-known (see for instance, \cite{barucci}) that $\K[[M]]=\{\dsum_{s\in M} a_s t^s: a_s \in \K\}$ is a subring of $\K[[t]]$, called the ring
of the semigroup associated to $M$. All the above invariants, as the multiplicity, the genus (degree of singularity) and the Frobenius number (conductor minus one) have their corresponding
interpretation in this context (see \cite{barucci}). Moreover, in \cite{kunz} it is shown that a numerical semigroup is symmetric if and only if $\K[[S]]$ is a Gorenstein ring and in \cite{barucci-froberg} the rings $\K[[S]]$ when $S$ is pseudo-symmetric are called Kunz rings. Then, as a consequence
of the results in this paper we have that, given a numerical semigroup $S$ we can decompose the ring $\K[[S]]$ as intersection of rings with the same multiplicity where some of them are Gorenstein, some other are Kunz and others are rings associated to numerical
 semigroups with special simplicity: $\{x\in \N: x\ge m\} \cup \{0\}$ and $\{x\in \N: x\ge m \text{ and } x\neq i\} \cup \{0\}$ for $i \in \{m+1, \ldots, 2m-1\}$.

\section{$m$-irreducible numerical semigroups}
\label{sec:2}
We begin this section showing that every numerical semigroup with multiplicity $m$ can be expressed as a finite intersection of $m$-irreducible numerical semigroups.

Note that $\{x \in \N: x\geq m\} \cup \{0\}$ is the maximum
(with respect to the inclusion ordering) of $\sem(m)$ and then, this semigroup is $m$-irreducible.

\begin{prop}
\label{prop:1}
Let $S \in \sem(m)$. Then, there exist $S_1, \ldots, S_k$ $m$-irreducible numerical semigroups such that $S=S_1 \cap \cdots \cap S_k$.
\end{prop}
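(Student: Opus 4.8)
The plan is to argue by strong induction on the genus $\g(S)$, exploiting the fact that $\g$ takes nonnegative integer values and strictly decreases when one passes to a proper oversemigroup inside $\sem(m)$. This is the classical strategy for ``every element is a finite intersection of irreducibles'' results, adapted to the fixed-multiplicity setting.

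First I would set up the base and trivial cases. If $S$ is itself $m$-irreducible, the statement holds with $k=1$ and $S_1=S$; in particular this covers the maximum element $\{x\in\N: x\geq m\}\cup\{0\}$ of $\sem(m)$, already observed to be $m$-irreducible, which sits at the bottom of the genus scale within $\sem(m)$. So the content is entirely in the case where $S$ is \emph{not} $m$-irreducible.

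In that case, unwinding the definition of $m$-irreducibility, there exist $S_1,S_2\in\sem(m)$ with $S=S_1\cap S_2$ and $S\subsetneq S_1$, $S\subsetneq S_2$. The key observation is that proper containment forces a strict drop in genus: since $S\subsetneq S_i$ there is a gap of $S$ lying in $S_i$, whence $\G(S_i)\subsetneq\G(S)$ and therefore $\g(S_i)<\g(S)$ for $i=1,2$. As $S_1$ and $S_2$ both lie in $\sem(m)$ and have strictly smaller genus, the induction hypothesis applies to each, writing $S_1$ and $S_2$ as finite intersections of $m$-irreducible numerical semigroups; concatenating these two decompositions and intersecting yields the desired finite decomposition of $S=S_1\cap S_2$.

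The main obstacle, and really the only substantive point, is guaranteeing termination of this recursive splitting. This is secured precisely by the strict decrease $\g(S_i)<\g(S)$ together with the genus being a nonnegative integer bounded below (indeed by $m-1$, since $1,\dots,m-1$ are gaps of every element of $\sem(m)$); well-foundedness of $\N$ then closes the argument. I would also take care to record that both semigroups produced at each step genuinely belong to $\sem(m)$, but this is immediate, since membership in $\sem(m)$ is built into the definition of $m$-irreducibility, so the induction hypothesis is legitimately available at every stage.
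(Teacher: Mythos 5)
Your proposal is correct and follows essentially the same route as the paper: induction on the genus, splitting a non-$m$-irreducible $S$ as a proper intersection $S_1\cap S_2$ within $\sem(m)$ and applying the induction hypothesis to each factor. You spell out the strict genus drop $\g(S_i)<\g(S)$ more explicitly than the paper does, but the argument is the same.
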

\begin{proof}
We prove the result by induction over $\g(S)$. If $\g(S)=m-1$, then $S=\{x \in \N: x\ge m\} \cup \{0\}$ and then  $m$-irreducible.
Assume that $\g(S)\ge m$ and that $S$ is no $m$-irreducible. Then there exist $S_1$ and $S_2$ in $\sem(m)$ such that $S \varsubsetneq S_1$, $S \varsubsetneq S_2$, and $S=S_1 \cap S_2$. By the induction hypothesis, there exist $S_{11}, \ldots, S_{1p}, S_{21}, \ldots, S_{2q}$ $m$-irreducible numerical semigroups such that $S_1 = S_{11} \cap \cdots \cap S_{1p}$ and $S_2 = S_{21} \cap \cdots \cap S_{2q}$.
 Then, $S=S_{11} \cap \cdots \cap S_{1p} \cap S_{21} \cap \cdots \cap S_{2q}$ is a decomposition of $S$ into $m$-irreducible numerical semigroups.
\end{proof}
Our next goal for this section is to prove Theorem \ref{theo:3} which states that the $m$-irreducible numerical semigroups are those maximal elements in $\sem(m)$ with the additional
condition that they have certain Frobenius number. In order to prove that result, we first introduce some notation and previous results.

Given two positive integer $m$ and $F$, we denote by $\sem(m,F)$ the set of numerical semigroups with multiplicity $m$ and Frobenius number $F$.
Note that $\sem(m,F) \neq \emptyset$ if and only if $F\ge m-1$ and $F$ is not a multiple of $m$.

Denote by $\sem^*(m,F)$ the set of maximal elements in $\sem(m,F)$  (with respect to the inclusion ordering).

The following result has an immediate proof and it is left to the reader

\begin{lem}
\label{lemma:2} $ $
\begin{enumerate}
 \item If $S\neq \N$ is a numerical semigroup, then $S \cup \{\F(S)\}$ is also a numerical semigroup.
\item Let $S_1, \ldots, S_n$ be numerical semigroups and $S=S_1\cap\cdots \cap S_n$. Then, $\F(S)=\max\{\F(S_1), \ldots, \F(S_n)\}$.
\end{enumerate}
\end{lem}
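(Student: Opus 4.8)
The plan is to handle the two assertions independently, since each reduces to the three defining axioms of a numerical semigroup (containing $0$, having finite complement in $\N$, and being closed under addition) together with the definition of $\F$ as the \emph{largest} integer outside the semigroup. Throughout I would lean on the single fact that, by maximality, every integer strictly larger than $\F(S)$ already belongs to $S$.

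For part (1), I would set $T = S \cup \{\F(S)\}$ and check the three axioms. Containment of $0$ and cofiniteness are immediate, since $T \supseteq S$ and hence $\N \setminus T \subseteq \N \setminus S$ is finite. The only point requiring real argument is closure under addition, so I would take $a,b \in T$ and split into cases. When $a,b \in S$ the sum lies in $S \subseteq T$; the remaining cases are those having $\F(S)$ as a summand. Here I would first observe that because $S \neq \N$ has at least one gap while $0 \in S$, we must have $\F(S) \geq 1$. Then for a sum $\F(S) + x$ with $x \in T$ and $x \neq 0$, the result strictly exceeds $\F(S)$ and therefore lies in $S \subseteq T$ by maximality, while $\F(S) + 0 = \F(S) \in T$. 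This exhausts all cases, so $T$ is a numerical semigroup.

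For part (2), I would first recall that an intersection of numerical semigroups is again a numerical semigroup (and, in the intended setting where each $S_i \neq \N$, proper), so that $\F(S)$ is well defined. Writing $M = \max\{\F(S_1), \ldots, \F(S_n)\}$ and choosing $j$ with $M = \F(S_j)$, the argument is a two-sided comparison. On one side, $M \notin S_j \supseteq S$ shows that $M$ is a gap of $S$. On the other side, any integer $x > M$ satisfies $x > \F(S_i)$ for every $i$, hence $x \in S_i$ for all $i$ and therefore $x \in S$. Thus $M$ is a gap of $S$ beyond which no gaps occur, i.e. $\F(S) = M$.

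I do not anticipate a genuine obstacle, which matches the authors' remark that the proof is immediate. The only spot demanding even minor care is the closure verification in part (1), specifically the borderline sum $\F(S) + \F(S)$; this is precisely why I would record the inequality $\F(S) \geq 1$ at the outset, so that $2\,\F(S) > \F(S)$ and the general ``sum exceeds the Frobenius number'' argument applies uniformly.
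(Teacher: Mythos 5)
Your proof is correct and is exactly the routine verification the authors had in mind when they declared the lemma ``immediate'' and left it to the reader: in (1) the only nontrivial axiom is closure, settled by noting that any sum involving $\F(S)$ and a nonzero element exceeds $\F(S)$ and so lies in $S$, and in (2) the two-sided comparison with $M=\max\{\F(S_1),\ldots,\F(S_n)\}$ is the standard argument. Your extra care with the borderline sum $\F(S)+\F(S)$ via $\F(S)\ge 1$ is a nice touch, and nothing is missing.
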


We are now ready to prove the announced result.

\begin{theo}
\label{theo:3}
Let $S \in \sem(m)$. Then, $S$ is $m$-irreducible if and only if $S \in \sem^*(m,\F(S))$.
\end{theo}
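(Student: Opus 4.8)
The plan is to prove both implications directly from the definition of $m$-irreducibility, using Lemma~\ref{lemma:2}(2) as the key link between intersections and Frobenius numbers. Throughout, fix $F=\F(S)$ and recall that $\sem^*(m,F)$ denotes the maximal elements of $\sem(m,F)$.

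For the forward implication, I would assume $S$ is $m$-irreducible and argue by contradiction that $S\in\sem^*(m,F)$. Since $S\in\sem(m,F)$ trivially, the only thing to verify is maximality: suppose $S$ were not maximal in $\sem(m,F)$, so there is some $T\in\sem(m,F)$ with $S\varsubsetneq T$. The idea is then to manufacture a second oversemigroup of $S$ so that $S$ is realized as an intersection of two strictly larger semigroups, contradicting irreducibility. The natural candidate is $S\cup\{F\}$: by Lemma~\ref{lemma:2}(1) this is again a numerical semigroup, it still has multiplicity $m$ (since $F\ge m$ because $F$ is not a multiple of $m$ and $F\ge m-1$ with $F\neq m-1$ when $S\neq\{x\ge m\}\cup\{0\}$ — the edge case $S=\{x\ge m\}\cup\{0\}$ is already maximum and hence maximal, so can be set aside), and it strictly contains $S$. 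Then I would check that $S = T\cap(S\cup\{F\})$: the inclusion $\subseteq$ is clear, and for $\supseteq$ note that any element of $T\cap(S\cup\{F\})$ lies in $S$ unless it equals $F$, but $F\notin T$ since $\F(T)=F$, so $F$ is excluded. Both $T$ and $S\cup\{F\}$ properly contain $S$, giving the desired contradiction with $m$-irreducibility.

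For the converse, I would assume $S\in\sem^*(m,F)$ and show $S$ is $m$-irreducible. Again argue by contradiction: suppose $S=S_1\cap S_2$ with $S\varsubsetneq S_1$ and $S\varsubsetneq S_2$, both in $\sem(m)$. By Lemma~\ref{lemma:2}(2), $F=\F(S)=\max\{\F(S_1),\F(S_2)\}$, so at least one of them, say $S_1$, satisfies $\F(S_1)=F$. Then $S_1\in\sem(m,F)$ and $S\varsubsetneq S_1$ contradicts the maximality of $S$ in $\sem(m,F)$. This direction is short and the main work is simply invoking the Frobenius-of-intersection lemma correctly.

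The main obstacle I anticipate is the bookkeeping around multiplicities and the degenerate case in the forward direction: I must confirm that $S\cup\{F\}\in\sem(m)$, i.e. that adjoining $F$ does not drop the multiplicity below $m$, which fails precisely when $F$ would be the new smallest positive element — but that cannot happen once $F\ge m$, and the only semigroup in $\sem(m)$ with $F<m$ is the maximum $\{x\in\N:x\ge m\}\cup\{0\}$ with $F=m-1$, which is already maximal in its $\sem(m,F)$ and $m$-irreducible by the remark preceding Proposition~\ref{prop:1}. Handling that single exceptional semigroup separately makes both implications go through cleanly.
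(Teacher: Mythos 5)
Your proposal is correct and matches the paper's own argument essentially step for step: both directions use $S\cup\{\F(S)\}$ (via Lemma~\ref{lemma:2}(1)) to write $S=T\cap(S\cup\{\F(S)\})$ for the necessity, and Lemma~\ref{lemma:2}(2) to locate a factor with Frobenius number $\F(S)$ for the sufficiency, with the maximum semigroup $\{x\in\N:x\ge m\}\cup\{0\}$ set aside as the trivial case. Your extra care in checking that $\F(S)>m$ so that adjoining $\F(S)$ preserves the multiplicity is exactly the point the paper handles by the same case split.
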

\begin{proof}
(Necessity) If $S=\{x \in \N: x\ge m\} \cup \{0\}$, clearly $S \in \sem^*(m,\F(S))$. Assume that $S\neq\{x \in \N: x\ge m\} \cup \{0\}$, then $\F(S)>m$. By Lemma \ref{lemma:2} we have that $S \cup \{\F(S)\} \in \sem(m)$. If $S\not\in \sem^*(m,\F(S))$ there exists $T\in \sem(m,\F(S))$ such that $S \varsubsetneq T$. It is clear that $S = T \cap (S \cup \{\F(S)\})$ contradicting that $S$ is $m$-irreducible.

(Sufficiency) Let $S \in \sem^*(m, \F(S))$. If $S$ is not $m$-irreducible then, there exist $S_1$ and $S_2$ in $\sem(m)$ such that $S\varsubsetneq S_1$, $S \varsubsetneq S_2$ and $S=S_1 \cap S_2$. By Lemma \ref{lemma:2}, $\F(S) \in \{\F(S_1), \F(S_2)\}$. Assume without loss of generality that $\F(S)=\F(S_1)$. Then, $S_1 \in \sem(m, \F(S))$ and $S \varsubsetneq S_1$ contradicting the maximality of $S$.
\end{proof}

In what follows we intend to give a proof for Proposition \ref{prop:6} where we describe explicitly the elements in $\sem^*(m, F)$. Before that, we give some previous results. The following lemma is well-known and appears in \cite{springer}.

\begin{lem}
\label{lemma:4}
Let $S$ be a numerical semigroup and assume that $h=\max\{ x \in \Z\backslash S: \F(S)-x \not\in S, x\neq \frac{F}{2}\}$ exists, then $S\cup \{h\}$ is a numerical semigroup with Frobenius number $\F(S)$.
\end{lem}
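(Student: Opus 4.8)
Write $F = \F(S)$ and $A = \{x \in \Z \setminus S : F - x \notin S,\ x \neq F/2\}$, so that $h = \max A$. My first step is to pin down what $h$ is. No $x \leq 0$ lies in $A$: if $x < 0$ then $F - x > F$ and hence $F - x \in S$, while $x = 0$ is itself in $S$; and $x = F$ is excluded because $F - F = 0 \in S$. Consequently $h$ is a gap of $S$ with $0 < h < F$, and from $h \in A$ I record the two facts that will drive the whole argument: $F - h \notin S$ and $h \neq F/2$. Since $h \notin S$, the set $S \cup \{h\}$ properly contains $S$, still contains $0$, and still has finite complement, so the only semigroup axiom needing work is closure under addition. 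Once that is in hand, $F$ is still a gap of $S \cup \{h\}$ (because $h \neq F$) and remains the largest one (because $h < F$), so $\F(S \cup \{h\}) = F$, as required.

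For closure it suffices to show that $h + s \in S$ for every $s \in S$ with $s > 0$, and that $2h \in S$. The engine is the maximality of $h$: any integer $x > h$ satisfying $x \notin S$, $F - x \notin S$ and $x \neq F/2$ would belong to $A$, contradicting $h = \max A$. Suppose first that $h + s \notin S$ for some $s \in S$ with $s > 0$. As $h + s > h$, maximality forces $F - (h + s) \in S$ or $h + s = F/2$. In the former case $F - h = (F - h - s) + s$ is a sum of two elements of $S$, hence lies in $S$, contradicting $F - h \notin S$. The latter case is the delicate one: here $F = 2h + 2s$, and I pass to $x' = h + 2s$, for which $F - x' = h \notin S$ and $x' = F/2 + s \neq F/2$; thus $x'$ would lie in $A$ unless $x' \in S$, so $x' \in S$, and since $x' = F - h$ this again contradicts $F - h \notin S$. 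Hence $h + s \in S$ in every case.

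The value $2h$ is treated in the same spirit. Assume $2h \notin S$; since $h \neq F/2$ we have $2h \neq F$, forcing $2h < F$, and maximality gives $F - 2h \in S$ or $2h = F/2$. If $F - 2h \in S$ then $F - 2h > 0$, so the previous paragraph applied to $s = F - 2h$ yields $h + (F - 2h) = F - h \in S$, a contradiction. If $2h = F/2$, that is $F = 4h$, I use $x' = 3h$: then $F - x' = h \notin S$ and $x' = 3h \neq 2h = F/2$, so maximality forces $3h \in S$, and $3h = F - h$ gives the final contradiction. I expect these $F/2$ cases to be the only real obstacle: the element whose membership in $S$ I need is kept out of $A$ solely by the clause $x \neq F/2$, so maximality cannot be applied to it directly; the remedy each time is to step one $s$ (respectively one $h$) further, reaching a point whose reflection $F - x'$ is exactly the gap $h$, which revives the maximality argument. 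With these cases dispatched, $S \cup \{h\}$ is closed under addition and the proof is complete.
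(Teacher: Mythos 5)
Your proof is correct. Note that the paper itself gives no argument for this lemma --- it is quoted as well-known and attributed to the reference \cite{springer} --- so there is no in-paper proof to compare against; what you have supplied is a complete, self-contained verification. Your reduction is the standard one: locate $h$ strictly between $0$ and $\F(S)$, observe that only closure under addition is at issue, and then use the maximality of $h$ to show $h+s\in S$ for $s\in S\setminus\{0\}$ and $2h\in S$. The two ``$x=F/2$'' escape cases are exactly where a careless argument breaks down, and your device of stepping to $x'=h+2s$ (respectively $x'=3h$), whose reflection $F-x'$ equals the gap $h$, handles them cleanly; the remaining cases reduce to the contradiction $F-h\in S$. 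All the small points check out: negative integers and $F$ itself are excluded from the set by the condition $\F(S)-x\notin S$, the inequality $2h<F$ follows because $2h\neq F$ and everything above $F$ lies in $S$, and $F-2h>0$ in the relevant case because $h\neq F/2$. I see no gap.
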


In \cite{pacific} it is shown that a numerical semigroup $S$ is irreducible if and only if it is maximal in the set of numerical semigroups with Frobenius number $\F(S)$. As a direct consequence of the above lemma we get the following result.
\begin{lem}
\label{lemma:5}
A numerical semigroup is irreducible if and only if $\{x \in \Z\backslash S: \F(S)-x \not\in S \text{ and } x \neq \dfrac{\F(S)}{2}\} = \emptyset$.
\end{lem}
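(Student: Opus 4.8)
The plan is to invoke the characterization recalled just above from \cite{pacific}: a numerical semigroup $S$ is irreducible if and only if $S$ is maximal in the set of numerical semigroups with Frobenius number $\F(S)$. Writing $A = \{x \in \Z\backslash S : \F(S)-x \notin S \text{ and } x \neq \frac{\F(S)}{2}\}$, I would first record that every $x \in A$ satisfies $1 \le x \le \F(S)$: indeed $x \le \F(S)$ because any integer exceeding $\F(S)$ lies in $S$, and $x \ge 1$ because $0 \in S$ while any $x < 0$ would give $\F(S)-x > \F(S)$, hence $\F(S)-x \in S$, contradicting membership in $A$. In particular $A$ is finite, so $\max A$ exists whenever $A \neq \emptyset$. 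The statement then amounts to showing that $A = \emptyset$ is equivalent to the maximality of $S$.

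For the implication ``$A \neq \emptyset \Rightarrow$ not irreducible'', I would set $h = \max A$. Lemma \ref{lemma:4} applies directly and yields that $S \cup \{h\}$ is a numerical semigroup with Frobenius number $\F(S)$; since $h \notin S$ we have $S \varsubsetneq S \cup \{h\}$, so $S$ is not maximal among numerical semigroups with Frobenius number $\F(S)$, hence not irreducible. Contrapositively, irreducibility forces $A = \emptyset$.

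For the converse I would assume $A = \emptyset$ and suppose, for contradiction, that $S$ is not maximal. Then there is a numerical semigroup $T$ with $\F(T) = \F(S)$ and $S \varsubsetneq T$; pick any $x \in T\backslash S$. First $x \neq \frac{\F(S)}{2}$, for otherwise $2x = \F(S) \in T$ by closure under addition, contradicting $\F(T)=\F(S) \notin T$. Since $x \in \Z\backslash S$, $x \neq \frac{\F(S)}{2}$, and $A = \emptyset$, we are forced to conclude $\F(S) - x \in S$. But then $\F(S) = x + (\F(S)-x)$ with $x \in T$ and $\F(S)-x \in S \subseteq T$, so $\F(S) \in T$, again contradicting $\F(T)=\F(S)$. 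Hence $S$ is maximal, i.e. irreducible.

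Both directions are short once Lemma \ref{lemma:4} and the maximality characterization are in hand; the only point needing care is the bookkeeping around the exceptional value $x = \frac{\F(S)}{2}$, which is exactly the configuration in which the two summands coincide and the ``$\F(S) \in T$'' addition argument would otherwise fail. I expect this case distinction, rather than any genuine estimate, to be the main (and fairly minor) obstacle.
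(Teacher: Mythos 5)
Your proof is correct and follows exactly the route the paper intends: the paper gives no written proof, simply declaring the lemma a direct consequence of Lemma \ref{lemma:4} together with the maximality characterization of irreducibility from \cite{pacific}, and your argument is the natural fleshing-out of that, including the necessary observation that the set is finite (so its maximum exists) and the careful handling of the excluded value $\frac{\F(S)}{2}$ in the converse.
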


We are ready to describe the structure of $\sem^*(m,F)$. Here, we denote by $a \equiv b \pmod c$ if $a-b$ is multiple of $c$.
\begin{prop}
\label{prop:6}
Let $m$ and $F$ be positive integers such that $F\ge m-1$ and $F \not\equiv 0 \pmod m$.
\begin{enumerate}
\item If $F=m-1$, then $\sem^*(m,F)=\big\{ \{x \in \N: x\ge m\} \cup \{0\} \big\}$.
\item If $m < F <2m$, then $\sem^*(m,F)=\big\{ \{x \in \N: x\ge m, x\neq F\} \cup \{0\} \big\}$.
\item If $F>2m$, then $\sem^*(m,F)=\big\{S \in \sem(m,F): S \text{ is irreducible }\big\}$.
\end{enumerate}
\end{prop}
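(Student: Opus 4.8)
The plan is to treat the three cases separately, disposing of (1) and (2) by directly exhibiting the unique maximum of $\sem(m,F)$ and reserving the real work for (3). For (1), the multiplicity and Frobenius conditions pin the semigroup down completely: any $S\in\sem(m,F)$ with $F=m-1$ must omit $1,\dots,m-1$ (because $\m(S)=m$) and must contain every integer $\ge m$ (because $\F(S)=m-1$), so $S=\{x\in\N:x\ge m\}\cup\{0\}$. Thus $\sem(m,m-1)$ is already a singleton and coincides with $\sem^*(m,m-1)$. For (2), I would set $T=\{x\in\N:x\ge m,\ x\neq F\}\cup\{0\}$ and first check $T\in\sem(m,F)$: it is closed under addition since the sum of two positive elements is at least $2m>F$ and hence lies in $T$, its multiplicity is $m$ (as $F>m$), and its set of gaps is $\{1,\dots,m-1,F\}$, so $\F(T)=F$. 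Then I would note that $T$ is in fact the maximum of $\sem(m,F)$: any $S\in\sem(m,F)$ omits $1,\dots,m-1$ and $F$ while containing every integer exceeding $F$, whence $S\subseteq T$. Therefore $\sem^*(m,F)=\{T\}$.

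For (3) the statement is an equality of sets, so I would prove the two inclusions. One inclusion is immediate from the characterization recalled just before Lemma \ref{lemma:5}: an irreducible numerical semigroup $S$ is maximal in the whole set of numerical semigroups with Frobenius number $F$, and since $\sem(m,F)$ is a subset of that set, $S$ is a fortiori maximal in $\sem(m,F)$, that is $S\in\sem^*(m,F)$. Here one uses only that $\m(S)=m$ and $\F(S)=F$ to place $S$ inside $\sem(m,F)$.

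The converse inclusion is where the hypothesis $F>2m$ is indispensable, and it is the main obstacle. Suppose $S\in\sem^*(m,F)$ but $S$ is not irreducible. By Lemma \ref{lemma:5} the set $A=\{x\in\Z\backslash S:\F(S)-x\notin S,\ x\neq \frac{F}{2}\}$ is nonempty; every element of $A$ is a positive gap of $S$ strictly below $F$, because $x\le 0$ or $x=F$ or $x>F$ all force $\F(S)-x\in S$. Hence $A$ is finite, $h=\max A$ exists, and Lemma \ref{lemma:4} yields that $S\cup\{h\}$ is a numerical semigroup with Frobenius number $F$. To contradict maximality within $\sem(m,F)$ I must guarantee that $S\cup\{h\}$ still has multiplicity $m$, i.e. that $h\ge m$, and this is the crux. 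The argument is a symmetry observation: if we had $h\le m-1$, then $F-h\notin S$ by definition of $A$, and $F-h$ would itself lie in $A$, since $F-(F-h)=h\notin S$ and $F-h\neq\frac{F}{2}$ (because $h<m<\frac{F}{2}$, using $F>2m$); but then $F-h>m>h=\max A$ contradicts the maximality of $h$. Hence $h\ge m$, so $S\cup\{h\}$ has multiplicity $m$, giving $S\varsubsetneq S\cup\{h\}\in\sem(m,F)$ and contradicting $S\in\sem^*(m,F)$. Therefore $S$ must be irreducible, which finishes (3).
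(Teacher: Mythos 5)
Your proof is correct and follows essentially the same route as the paper: cases (1) and (2) by exhibiting the maximum of $\sem(m,F)$ directly, and case (3) by taking $h=\max\{x\in\Z\backslash S:\F(S)-x\notin S,\ x\neq \frac{F}{2}\}$ via Lemmas \ref{lemma:5} and \ref{lemma:4} and adjoining it to contradict maximality. Your symmetry argument forcing $h\ge m$ is just an expanded justification of the paper's terse observation that $h>\frac{F}{2}>m$.
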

\begin{proof}
$(1)$ and $(2)$ are trivial. Let us then prove $(3)$. It is clear, by the comment before Lemma \ref{lemma:5}, that $\{S \in \sem(F,m): S \text{ is irreducible }\big\} \subseteq \sem^*(m,F)$.
We prove the other inclusion. Let $S \in \sem^*(m,F)$. If $S$ is not irreducible, we know, by Lemma \ref{lemma:5}, that
$h=\max\{ x \not\in S: \F(S)-x \not\in S, x\neq \frac{F}{2}\}$ exists, and by Lemma \ref{lemma:4},
 that $S \cup \{h\}$ is numerical semigroup with Frobenius number $F$. Furthermore, $h> \frac{F}{2} > m$
 and then, $S \cup \{h\} \in \sem(m,F)$ contradicting the maximality of $S$.
\end{proof}

As a consequence of Theorem \ref{theo:3} and Proposition \ref{prop:6} we get this corollary.
\begin{cor}
\label{corollary:7}
A numerical semigroup, $S$, with multiplicity $m$ is $m$-irreducible if and only if one of the following conditions holds:
\begin{enumerate}
\item $S=\{x \in \N: x\ge m\} \cup \{0\}$.
\item $S=\{x \in \N: x\ge m, x\neq f\} \cup \{0\}$ with $f \in \{m+1, \ldots, 2m-1\}$.
\item $S$ is an irreducible numerical semigroup.
\end{enumerate}
\end{cor}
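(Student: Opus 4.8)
The plan is to read this corollary off directly from Theorem \ref{theo:3} and Proposition \ref{prop:6}: the former reduces $m$-irreducibility to membership in $\sem^*(m,\F(S))$, and the latter describes this set explicitly according to the value of the Frobenius number, so the three conditions in the corollary should correspond exactly to the three cases of Proposition \ref{prop:6} indexed by $\F(S)$. First I would record the constraints on $\F(S)$ coming from the hypothesis that $S$ has multiplicity $m$: the integers $1,\ldots,m-1$ are gaps and every multiple of $m$ lies in $S$, so $\F(S)\ge m-1$ and $\F(S)\not\equiv 0\pmod m$. Hence $F=\F(S)$ satisfies the hypotheses of Proposition \ref{prop:6}.

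For the necessity I would assume $S$ is $m$-irreducible, so that $S\in\sem^*(m,\F(S))$ by Theorem \ref{theo:3}, and then split into the three exhaustive cases for $\F(S)$. If $\F(S)=m-1$, Proposition \ref{prop:6}(1) forces $S=\{x\in\N:x\ge m\}\cup\{0\}$, which is condition (1). If $m<\F(S)<2m$, that is $\F(S)\in\{m+1,\ldots,2m-1\}$, Proposition \ref{prop:6}(2) forces $S=\{x\in\N:x\ge m,\ x\neq \F(S)\}\cup\{0\}$, which is condition (2) with $f=\F(S)$. If $\F(S)>2m$, Proposition \ref{prop:6}(3) gives that $S$ is irreducible, which is condition (3). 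Since $\F(S)\ge m-1$ and $\F(S)$ is not a multiple of $m$, these cases exhaust all possibilities, the values $m$ and $2m$ being excluded.

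For the sufficiency I would treat each condition in turn and verify that $S\in\sem^*(m,\F(S))$, so that Theorem \ref{theo:3} applies. In case (1) we have $\F(S)=m-1$ and Proposition \ref{prop:6}(1) puts $S$ in $\sem^*(m,\F(S))$; in case (2) the gaps of $S$ are $1,\ldots,m-1$ together with $f$, hence $\F(S)=f$ with $m<f<2m$, and Proposition \ref{prop:6}(2) again puts $S$ in $\sem^*(m,\F(S))$.

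The only point requiring care, and the place I would expect the main (though modest) obstacle, is case (3): an irreducible $S$ need not satisfy $\F(S)>2m$, so I cannot simply quote Proposition \ref{prop:6}(3). Instead I would argue directly, using the characterization recalled before Lemma \ref{lemma:5}: an irreducible semigroup is maximal among all numerical semigroups with Frobenius number $\F(S)$. Since $\sem(m,\F(S))$ is a subset of that family and contains $S$, maximality is inherited, so $S\in\sem^*(m,\F(S))$ and $S$ is $m$-irreducible by Theorem \ref{theo:3}. This handles all three conditions and completes the equivalence.
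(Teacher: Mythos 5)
Your proof is correct and follows exactly the route the paper intends: the corollary is stated there as an immediate consequence of Theorem \ref{theo:3} and Proposition \ref{prop:6}, with no written proof, and your case analysis on $\F(S)$ is the intended derivation. Your extra care in the sufficiency of condition (3) --- noting that an irreducible $S$ may have $\F(S)\le 2m$, so one must invoke maximality among all numerical semigroups with Frobenius number $\F(S)$ rather than Proposition \ref{prop:6}(3) --- correctly fills the only detail the paper's terse ``as a consequence'' glosses over.
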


\section{The gaps of a $m$-irreducible numerical semigroup}
\label{sec:3}

Let $q$ be a rational number. We denote by $\lceil q \rceil = \min\{z \in \Z: q\leq z\}$ the ceiling part of $q$. It is well-known (see for instance \cite{springer}) that if $S$ is a numerical semigroup then  $\g(S) \ge \left\lceil \dfrac{\F(S)+1}{2} \right\rceil$. The following
result is easily deduced from Lemma \ref{lemma:5} and appears in \cite{springer}.
\begin{lem}
 \label{lemma:8}
 A numerical semigroup $S$ is irreducible if and only if $\g(S)= \left\lceil \dfrac{\F(S)+1}{2} \right\rceil$.
\end{lem}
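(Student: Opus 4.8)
The plan is to read off the equality case in the genus bound $\g(S) \geq \left\lceil \frac{\F(S)+1}{2}\right\rceil$ and match it against the characterization of irreducibility in Lemma~\ref{lemma:5}. Write $F = \F(S)$ and consider the involution $x \mapsto F - x$ acting on $\{0, 1, \ldots, F\}$. The engine behind the lower bound is the observation that each pair $\{x, F-x\}$ contains at least one gap of $S$: if both $x \in S$ and $F - x \in S$, then $F = x + (F-x) \in S$, contradicting $F \notin S$. When $F$ is even the fixed point $x = F/2$ is itself a gap by the same addition argument.

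First I would recount the gaps through this pairing to recover the bound and isolate its equality case. If $F$ is odd, $\{0, \ldots, F\}$ splits into $\frac{F+1}{2}$ pairs, each carrying at least one gap, so $\g(S) \geq \frac{F+1}{2} = \left\lceil \frac{F+1}{2}\right\rceil$; if $F$ is even, the gap $F/2$ together with the remaining $\frac{F}{2}$ pairs gives $\g(S) \geq \frac{F}{2} + 1 = \left\lceil \frac{F+1}{2}\right\rceil$. In either parity, equality holds if and only if no pair $\{x, F-x\}$ with $x \neq F/2$ consists of two gaps, that is, if and only if exactly one element of each such pair lies outside $S$.

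Next I would translate this equality condition into the language of Lemma~\ref{lemma:5}. A pair $\{x, F - x\}$ with $x \neq F/2$ both of whose members are gaps is exactly an element $x \in \Z\backslash S$ with $F - x \notin S$ and $x \neq \frac{F}{2}$; the boundary value $x = F$ is harmless since its partner $0$ lies in $S$, and values $x < 0$ or $x > F$ never produce a partner outside $S$. Hence equality in the genus bound is equivalent to $\{x \in \Z\backslash S : F - x \notin S \text{ and } x \neq \frac{F}{2}\} = \emptyset$, which by Lemma~\ref{lemma:5} is equivalent to $S$ being irreducible, settling both implications simultaneously.

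The only delicate point is the bookkeeping around the fixed point $F/2$ and the boundary index $F$: I must verify that the set appearing in Lemma~\ref{lemma:5} counts precisely the ``doubly-gapped'' pairs and nothing else, so that its emptiness corresponds to equality in the bound exactly rather than up to a stray term. Once that correspondence is pinned down, the two-sided equivalence follows at once, and no separate treatment of the symmetric versus pseudo-symmetric subcases is required.
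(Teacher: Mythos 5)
Your proof is correct and is precisely the ``easy deduction from Lemma \ref{lemma:5}'' that the paper alludes to without writing out (it only cites \cite{springer}): the pairing $x \mapsto \F(S)-x$ gives the lower bound on the genus, and equality fails exactly when some non-fixed pair is doubly gapped, which is the emptiness condition of Lemma \ref{lemma:5}. Your bookkeeping at $x=F/2$, at the boundary pair $\{0,F\}$, and for $x<0$ or $x>F$ is all accurate, so nothing is missing.
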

As a consequence of Theorem \ref{theo:3}, Proposition \ref{prop:6} and Lemma \ref{lemma:8} we get the following result.

\begin{prop}
\label{prop:9}
If $S$ is a $m$-irreducible numerical semigroup, then
$$
\g(S) = \left\{\begin{array}{cl}
m-1 & \mbox{ if $\F(S)=m-1$,}\\
m  & \mbox{ if $m < \F(S) < 2m$,}\\
\left\lceil \dfrac{\F(S)+1}{2} \right\rceil & \mbox{ if $\F(S)>2m$}\\
\end{array}.\right.
$$
\end{prop}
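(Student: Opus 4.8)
The plan is to handle the three cases of Proposition~\ref{prop:9} separately, since each corresponds exactly to one of the three cases of the structural description of $m$-irreducible semigroups. By Corollary~\ref{corollary:7}, an $m$-irreducible numerical semigroup is one of: the maximum semigroup $\{x\in\N: x\ge m\}\cup\{0\}$, a semigroup of the form $\{x\in\N: x\ge m, x\neq f\}\cup\{0\}$ with $f\in\{m+1,\ldots,2m-1\}$, or an irreducible numerical semigroup. The strategy is to match these three shapes against the three Frobenius-number regimes and read off the genus directly.

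First I would dispose of the two easy cases by explicit gap-counting. If $\F(S)=m-1$, then $S=\{x\in\N: x\ge m\}\cup\{0\}$, whose gaps are precisely $\{1,2,\ldots,m-1\}$, giving $\g(S)=m-1$. If $m<\F(S)<2m$, then by Corollary~\ref{corollary:7} (case 2) we have $S=\{x\in\N: x\ge m, x\neq f\}\cup\{0\}$ with $f=\F(S)\in\{m+1,\ldots,2m-1\}$; its gaps are $\{1,\ldots,m-1\}$ together with the single extra gap $f$, so $\g(S)=m$. Both of these are direct counts with no real obstacle.

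For the third case, where $\F(S)>2m$, I would argue that by Corollary~\ref{corollary:7} the only $m$-irreducible semigroups with Frobenius number exceeding $2m$ are the irreducible ones (the first two shapes have Frobenius number at most $2m-1$). Then Lemma~\ref{lemma:8} applies immediately: an irreducible numerical semigroup satisfies $\g(S)=\left\lceil\frac{\F(S)+1}{2}\right\rceil$, which is exactly the claimed value. Here the work is already done by the cited lemma, so this case reduces to invoking the correspondence from Corollary~\ref{corollary:7}.

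The one point requiring care, and the place I would expect the main (though modest) obstacle, is checking that the three Frobenius-number regimes are correctly aligned with the three shapes of Corollary~\ref{corollary:7}, so that the case analysis is exhaustive and non-overlapping. Specifically, I must confirm that the maximum semigroup has $\F=m-1$, that the ``one extra gap'' semigroups have $\F\in\{m+1,\ldots,2m-1\}$, and that any genuinely irreducible $m$-irreducible semigroup falling outside these two families necessarily has $\F>2m$ (using that $\F$ cannot be a multiple of $m$, per the remark preceding Proposition~\ref{prop:6}, which rules out $\F=m$ and $\F=2m$). Once this alignment is verified, the genus formula in each regime follows immediately from the explicit count or from Lemma~\ref{lemma:8}, completing the proof.
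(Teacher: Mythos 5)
Your proposal is correct and follows essentially the same route as the paper, which derives the result from the structural classification of $m$-irreducible semigroups (Theorem~\ref{theo:3} together with Proposition~\ref{prop:6}, equivalently Corollary~\ref{corollary:7}) combined with Lemma~\ref{lemma:8}; the alignment of Frobenius regimes with the three shapes that you flag as the delicate point is exactly what Proposition~\ref{prop:6} supplies. The explicit gap counts in the first two cases and the appeal to Lemma~\ref{lemma:8} in the third match the paper's (terse) argument.
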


By Lemma \ref{lemma:8}, the irreducible numerical semigroups are those with the smallest possible number of gaps once the Frobenius number is fixed.
 In the following result we prove that this property is extendable for $m$-irreducible numerical semigroups.

Let $m$ and $F$ be two positive integers such that $F \ge m-1$ and $F \not\equiv 0 \pmod m$. We denote by
$$
\g(m, F) = \min\{ \g(S): S \in \sem(m,F)\},
$$
the minimum genus among all the numerical semigroups with multiplicity $m$ and Frobenius number $F$.

\begin{theo}
\label{theo:10}
Let $S$ be a numerical semigroup with multiplicity $m$ and Frobenius number $F$. Then, $S$ is $m$-irreducible if and only if $\g(S)=\g(m,F)$.
\end{theo}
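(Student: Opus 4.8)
The plan is to deduce this equivalence from Theorem~\ref{theo:3}, which already identifies the $m$-irreducible elements of $\sem(m,F)$ with the maximal ones $\sem^*(m,F)$, together with Proposition~\ref{prop:9}, which tells us that all $m$-irreducible numerical semigroups sharing the Frobenius number $F$ have one and the same genus. The bridge between ``maximal for inclusion'' and ``minimal for genus'' is the elementary monotonicity fact that genus strictly decreases along proper inclusions: if $T, T' \in \sem(m,F)$ and $T \varsubsetneq T'$, then $\G(T') \varsubsetneq \G(T)$, whence $\g(T') < \g(T)$. First I would record that $\sem(m,F)$ is finite, since every one of its elements has all of its gaps contained in $\{1, \ldots, F\}$; consequently $\sem^*(m,F)$ is nonempty and every $T \in \sem(m,F)$ is contained in at least one element of $\sem^*(m,F)$.

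For the sufficiency, suppose $\g(S) = \g(m,F)$ but that $S$ is not maximal in $\sem(m,F)$. Then there is $T \in \sem(m,F)$ with $S \varsubsetneq T$, and the monotonicity above gives $\g(T) < \g(S) = \g(m,F)$, contradicting the minimality that defines $\g(m,F)$. Hence $S \in \sem^*(m,F)$, and by Theorem~\ref{theo:3} it is $m$-irreducible. Note that this direction uses only monotonicity and minimality, and does not require Proposition~\ref{prop:9}.

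For the necessity, assume $S$ is $m$-irreducible, so $S \in \sem^*(m,F)$ by Theorem~\ref{theo:3}. Choose $T_0 \in \sem(m,F)$ attaining the minimum genus $\g(m,F)$, and extend it, using finiteness, to a maximal element $S_0 \in \sem^*(m,F)$ with $T_0 \subseteq S_0$. Monotonicity yields $\g(S_0) \le \g(T_0) = \g(m,F)$, and minimality forces $\g(S_0) = \g(m,F)$. Now $S$ and $S_0$ are both $m$-irreducible with the same Frobenius number $F$, so Proposition~\ref{prop:9} gives $\g(S) = \g(S_0) = \g(m,F)$, as required.

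The only genuinely load-bearing step is the necessity direction, and more precisely the need to know that all maximal elements of $\sem(m,F)$ have the \emph{same} genus. A priori, distinct maximal elements of a finite poset can sit at different heights, so maximality by itself would not force minimal genus; Proposition~\ref{prop:9} is exactly what removes this obstacle, by pinning the genus of every $m$-irreducible numerical semigroup to a value depending only on $m$ and $F$. If one preferred to sidestep Proposition~\ref{prop:9}, one could instead argue case by case via Proposition~\ref{prop:6} that $\g(m,F)$ equals $m-1$, $m$, or $\left\lceil \frac{F+1}{2} \right\rceil$ according to the three ranges $F=m-1$, $m<F<2m$, and $F>2m$, but invoking Proposition~\ref{prop:9} is cleaner.
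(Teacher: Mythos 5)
Your proof is correct. The sufficiency direction is exactly the paper's argument: minimal genus forces maximality in $\sem(m,F)$ by the monotonicity of genus under proper inclusion, and Theorem~\ref{theo:3} finishes. The necessity direction is packaged differently: the paper runs the three-way case analysis of Proposition~\ref{prop:6} and verifies directly that $\g(S)$ equals $m-1$, $m$, or $\left\lceil \frac{F+1}{2}\right\rceil$, which in each case is visibly the minimum over $\sem(m,F)$; you instead avoid computing $\g(m,F)$ altogether, producing one maximal element of minimal genus (by extending a genus-minimizer to a maximal element in the finite poset $\sem(m,F)$) and then invoking Proposition~\ref{prop:9} to conclude that all $m$-irreducible semigroups with the same $m$ and $F$ share that genus. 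Since Proposition~\ref{prop:9} is itself derived from Proposition~\ref{prop:6} and Lemma~\ref{lemma:8}, the two routes rest on the same machinery, but yours isolates cleanly the one fact that matters --- that genus is a function of $(m,F)$ on the $m$-irreducibles --- at the cost of an extra (easy) existence argument that the paper's direct verification does not need. Both are complete; no gaps.
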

\begin{proof}
(Sufficiency) It is clear that if $\g(S)=\g(m, F)$ then $S$ is maximal in $\sem(m, F)$. Hence, $S \in \sem^*(m, F)$ and by Theorem \ref{theo:3} we have that $S$ is $m$-irreducible.

(Necessity) By Theorem \ref{theo:3} and Proposition \ref{prop:6} we distinguish the following three cases:
\begin{enumerate}
\item If $S=\{x \in \N: x\ge m\} \cup \{0\}$, it is clear that $\g(S)=m-1=\g(m, F)$.
\item If $S=\{x \in \N: x\ge m, x\neq F\} \cup \{0\}$ with $m<F<2m$, then, clearly  $\g(S)=m=\g(m, F)$.
\item If $S$ is irreducible and $F>2m$, then by Lemma \ref{lemma:8}, $\g(S)=\g(m, F)$.
\end{enumerate}
\end{proof}

From the above theorem we have that the $m$-irreducible numerical semigroups are exactly those with minimum genus among all the numerical semigroups with its same multiplicity and Frobenius number.

\begin{cor}
\label{corollary:11}
 Let $S$ be a numerical semigroup with multiplicity $m$. Then, $S$ is $m$-irreducible if and only if $\g(S) \in \big\{m-1, m, \left\lceil \dfrac{\F(S)+1}{2} \right\rceil\big\}$.
\end{cor}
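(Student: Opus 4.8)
The plan is to obtain the corollary from Proposition~\ref{prop:9} for one implication and from Theorem~\ref{theo:10} together with a short case analysis on $\F(S)$ for the other. Write $F=\F(S)$. Two elementary lower bounds for any $S\in\sem(m)$ will do all the work: the integers $1,\dots,m-1$ are always gaps, so $\g(S)\ge m-1$; and the inequality $\g(S)\ge\left\lceil\frac{F+1}{2}\right\rceil$ recalled at the beginning of this section. I also use that $m\in S$ forces $F\not\equiv 0\pmod m$, so that exactly one of the three ranges $F=m-1$, $m<F<2m$, $F>2m$ occurs.

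For necessity there is nothing to do beyond quoting Proposition~\ref{prop:9}: if $S$ is $m$-irreducible it computes $\g(S)$ to be $m-1$, $m$, or $\left\lceil\frac{F+1}{2}\right\rceil$ according to the range of $F$, and each of these lies in the stated set.

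For sufficiency I assume $\g(S)\in\left\{m-1,\,m,\,\left\lceil\frac{F+1}{2}\right\rceil\right\}$ and treat the three ranges, aiming to force $\g(S)=\g(m,F)$ so that Theorem~\ref{theo:10} applies. If $F=m-1$, then $\sem(m,F)$ consists only of $\{x\in\N:x\ge m\}\cup\{0\}$, which is $m$-irreducible. If $m<F<2m$, then $F\ge m+1$ is a gap distinct from $1,\dots,m-1$, so $\g(S)\ge m$; since $F<2m$ gives $\left\lceil\frac{F+1}{2}\right\rceil\le m$, and the remaining candidate $m-1$ satisfies $m-1<m\le\g(S)$, the hypothesis forces $\g(S)=m=\g(m,F)$. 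If $F>2m$, then $F\ge 2m+1$ yields $\left\lceil\frac{F+1}{2}\right\rceil\ge m+1$, so the lower bound $\g(S)\ge\left\lceil\frac{F+1}{2}\right\rceil$ already exceeds both $m-1$ and $m$, and the hypothesis forces $\g(S)=\left\lceil\frac{F+1}{2}\right\rceil=\g(m,F)$.

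The only step requiring care is the identification of $\g(m,F)$ with the appropriate one of the three values in each range; this is exactly where Proposition~\ref{prop:6} (the description of $\sem^*(m,F)$) and Lemma~\ref{lemma:8} are invoked, to exhibit a semigroup in $\sem(m,F)$ attaining the value and thereby confirm it is the minimum. Once the ordering among $m-1$, $m$ and $\left\lceil\frac{F+1}{2}\right\rceil$ in each range is settled, the argument is pure bookkeeping with the two lower bounds, so I anticipate no real obstacle beyond this verification.
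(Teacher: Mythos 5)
Your proof is correct, and while the necessity direction coincides with the paper's (both just quote Proposition~\ref{prop:9}), your sufficiency argument is organized differently. The paper splits on which of the three values $\g(S)$ actually takes and identifies $S$ structurally: from $\g(S)=m-1$ it reads off $S=\{x\in\N: x\ge m\}\cup\{0\}$, from $\g(S)=m$ it deduces $m<\F(S)<2m$ and $S=\{x\in\N: x\ge m,\ x\neq \F(S)\}\cup\{0\}$, and from $\g(S)=\left\lceil \frac{\F(S)+1}{2}\right\rceil$ it gets irreducibility via Lemma~\ref{lemma:8}; in each case Corollary~\ref{corollary:7} then gives $m$-irreducibility. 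You instead split on the range of $F=\F(S)$, use the two lower bounds $\g(S)\ge m-1$ and $\g(S)\ge\left\lceil\frac{F+1}{2}\right\rceil$ to show that whichever of the three values the hypothesis supplies must equal $\g(m,F)$, and finish with Theorem~\ref{theo:10}. Your route buys a small simplification: the paper's second case silently requires the observation that a genus-$m$ semigroup of multiplicity $m$ must have its extra gap strictly between $m$ and $2m$ (otherwise closure under addition fails), whereas conditioning on $F$ first makes that step unnecessary; the cost is that you must verify $\g(m,F)$ equals the right value in each range, which you correctly source from Proposition~\ref{prop:6} and Lemma~\ref{lemma:8}. Both arguments are sound and of comparable length.
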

\begin{proof}
 (Necessity) It is a direct consequence of Proposition \ref{prop:9}.

(Sufficiency) If $\g(S)=m-1$, then $S=\{x \in \N: x\ge m\} \cup \{0\}$ and by Corollary \ref{corollary:7} we have that $S$ is $m$-irreducible.

If $\g(S)=m$ then we deduce that $m<\F(S)<2m$, and $S=\{x \in \N: x\ge m, x\neq \F(S)\} \cup \{0\}$ and again by applying Corollary \ref{corollary:7}, $S$ is $m$-irreducible.

If $\g(S)=\left\lceil \dfrac{\F(S)+1}{2} \right\rceil$, then by
Corollary \ref{corollary:7} and Lemma \ref{lemma:8}, we conclude that $S$ is $m$-irreducible.
\end{proof}

Let $S$ be a numerical semigroup. We say that an element $x\in \N\backslash S$ is an special gap of $S$ if $S \cup \{x\}$ is a numerical semigroup. We denote by $\SG(S)$ the set of all the special
gaps of $S$. Let $A$ be a set, we denote by $|A|$ the cardinal of $A$.

The following result appears in \cite{springer}.

\begin{lem}
 \label{lemma:12}
Let $S$ be a numerical semigroup.
\begin{enumerate}
 \item $S$ is irreducible if and only if $|\SG(S)|\leq 1$.
\item If $T$ is a numerical semigroup such that $S \varsubsetneq T$ and $x=\max (T\backslash S)$, then $x \in \SG(S)$.
\end{enumerate}
\end{lem}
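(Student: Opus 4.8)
The plan is to establish part (2) first, since it is elementary and feeds directly into part (1). For part (2), I would show that $S':=S\cup\{x\}$ is a numerical semigroup, where $x=\max(T\backslash S)$. It contains $0$ and has finite complement because $S$ does, so only closure under addition requires checking. The key observation is that every element of $T$ strictly larger than $x$ lies in $S$: such an element belongs to $T$ but, by maximality of $x$ in $T\backslash S$, cannot lie in $T\backslash S$. Now take $a,b\in S'$. If both lie in $S$ we are done; if $a=x$ and $b\in S\backslash\{0\}$ then $a+b\in T$ and $a+b>x$, so $a+b\in S$; and $2x\in T$ with $2x>x$ gives $2x\in S$. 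Hence $S'$ is closed under addition, so $x\in\SG(S)$.

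For part (1), I would route the argument through the characterization of \cite{pacific} quoted above: $S$ is irreducible if and only if $S$ is maximal in the set of numerical semigroups with Frobenius number $\F(S)$. The bridge to special gaps is the remark that $\F(S)$ is always a special gap: by Lemma \ref{lemma:2}(1), $S\cup\{\F(S)\}$ is a numerical semigroup, so $\F(S)\in\SG(S)$ whenever $S\neq\N$. Consequently $|\SG(S)|\le 1$ is equivalent to $\SG(S)=\{\F(S)\}$.

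To prove the equivalence I would argue both contrapositives. If $|\SG(S)|\ge 2$, pick $x\in\SG(S)$ with $x\neq\F(S)$; then $x<\F(S)$, so $T:=S\cup\{x\}$ is a numerical semigroup with $S\varsubsetneq T$ and $\F(T)=\F(S)$ (the gap $\F(S)$ survives in $T$), contradicting maximality, so $S$ is not irreducible. Conversely, if $S$ is not irreducible it is not maximal for its Frobenius number, so there is a numerical semigroup $T$ with $S\varsubsetneq T$ and $\F(T)=\F(S)$; by part (2), $x=\max(T\backslash S)\in\SG(S)$, and since $x\in T$ while $\F(S)\notin T$ we have $x\neq\F(S)$, giving $|\SG(S)|\ge 2$.

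The difficulties here are mild. The only points demanding care are verifying that the special gap produced from $T$ is genuinely distinct from $\F(S)$ (which rests on $\F(S)\notin T$, a consequence of $\F(T)=\F(S)$) and confirming that adjoining a special gap lying below $\F(S)$ leaves the Frobenius number unchanged. The degenerate case $S=\N$, where $\SG(S)=\emptyset$ and $S$ is vacuously irreducible, should be flagged separately so that the count $\F(S)\in\SG(S)$ is only invoked for $S\neq\N$.
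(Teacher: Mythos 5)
Your proof is correct. The paper itself gives no argument for this lemma --- it is simply quoted from \cite{springer} --- so there is no in-paper proof to compare against; your write-up supplies a complete and valid one using only facts the paper itself records (the maximality characterization of irreducibility from \cite{pacific} and Lemma \ref{lemma:2}(1)). All the delicate points are handled properly: the verification that $S\cup\{x\}$ is closed under addition via ``everything in $T$ above $x$ already lies in $S$'', the observation that $\F(S)\in\SG(S)$ for $S\neq\N$ so that $|\SG(S)|\le 1$ means $\SG(S)=\{\F(S)\}$, the check that the special gap extracted from $T$ in the converse direction differs from $\F(S)$ because $\F(T)=\F(S)\notin T$, and the separate treatment of $S=\N$.
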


From the above lemma we have that the irreducible numerical semigroups are characterized by the cardinal of the set of its special gaps. Next, we show that the
$m$-irreducible numerical semigroups are characterized by the cardinal of its special gaps that are greater than $m$.

\begin{theo}
 \label{theo:13}
Let $S$ be a numerical semigroup with multiplicity $m$. Then, $S$ is $m$-irreducible if and only if $|\{x \in \SG(S): x>m\}| \leq 1$.
\end{theo}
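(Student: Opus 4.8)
The plan is to reduce the statement to the maximality characterization of Theorem \ref{theo:3} and then to track which special gaps survive the constraint of keeping the multiplicity equal to $m$. First I would dispose of the case $S=\{x\in\N:x\ge m\}\cup\{0\}$: here $\G(S)=\{1,\dots,m-1\}$ has no element exceeding $m$, so $\{x\in\SG(S):x>m\}=\emptyset$, while $S$ is $m$-irreducible because it is the maximum of $\sem(m)$. Thus both sides of the equivalence hold trivially, and from now on I would assume $S\neq\{x\in\N:x\ge m\}\cup\{0\}$, which forces $\F(S)>m$ (recall $\F(S)\ge m-1$, while $\F(S)=m-1$ would give back the maximum semigroup and $\F(S)=m$ is impossible since $m\in S$).

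The key observation is that $\F(S)$ is itself a special gap greater than $m$: by Lemma \ref{lemma:2}(1), $S\cup\{\F(S)\}$ is a numerical semigroup, so $\F(S)\in\SG(S)$, and $\F(S)>m$ in the case at hand. Hence $\F(S)\in\{x\in\SG(S):x>m\}$ is always present, and the genuine question is whether this set contains anything beyond $\F(S)$. The heart of the argument is the equivalence that $S$ fails to be maximal in $\sem(m,\F(S))$ if and only if there is a special gap $x$ with $m<x\neq\F(S)$.

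For the forward direction of this equivalence I would start from a numerical semigroup $T\in\sem(m,\F(S))$ with $S\varsubsetneq T$ and set $x=\max(T\setminus S)$; Lemma \ref{lemma:12}(2) gives $x\in\SG(S)$, the multiplicity $m$ of $T$ forces $x\ge m$ and hence $x>m$ (as $m\in S$), and $x\in T$ together with $\F(S)=\F(T)\notin T$ gives $x\neq\F(S)$. Conversely, given a special gap $x$ with $m<x\neq\F(S)$, I would check that $S\cup\{x\}$ lies in $\sem(m,\F(S))$ and properly contains $S$: it is a numerical semigroup because $x\in\SG(S)$, its least positive element is still $m$ because $x>m$, and its Frobenius number is still $\F(S)$ because $x<\F(S)$ leaves $\F(S)$ as the largest gap. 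These invariance checks for the multiplicity and the Frobenius number are the only steps requiring care, and I expect the bookkeeping in them to be the main, though routine, obstacle.

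Finally I would assemble the pieces through Theorem \ref{theo:3}. Combining the equivalence above with the fact that $\F(S)$ is always one special gap exceeding $m$, maximality of $S$ in $\sem(m,\F(S))$ becomes equivalent to $\{x\in\SG(S):x>m\}=\{\F(S)\}$, that is, to this set having cardinality exactly $1$; and failure of maximality becomes equivalent to the set having cardinality at least $2$. Together with the trivial case of cardinality $0$ treated first, this yields that $S$ is $m$-irreducible if and only if $|\{x\in\SG(S):x>m\}|\le 1$, as claimed.
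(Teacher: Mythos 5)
Your proof is correct, but it takes a noticeably different route from the paper's. The paper proves necessity by running through the explicit classification of $m$-irreducible semigroups (Theorem~\ref{theo:3} combined with Proposition~\ref{prop:6}), invoking the fact that an irreducible semigroup has at most one special gap (Lemma~\ref{lemma:12}) in the case $\F(S)>2m$; and it proves sufficiency directly from the definition, writing a non-$m$-irreducible $S$ as $S_1\cap S_2$ with $S_i\in\sem(m)$ and extracting two distinct special gaps $\max(S_i\setminus S)>m$ via Lemma~\ref{lemma:12}. You instead channel both directions through the maximality criterion of Theorem~\ref{theo:3} alone, by establishing the two-way correspondence ``$S$ fails to be maximal in $\sem(m,\F(S))$ if and only if there is a special gap $x$ with $m<x\neq\F(S)$'' and observing that $\F(S)$ itself is always a special gap exceeding $m$ once $S$ is not the maximum of $\sem(m)$. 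Your verifications all go through: $x=\max(T\setminus S)$ is a special gap $>m$ distinct from $\F(S)$ because $\F(T)=\F(S)\notin T$, and $S\cup\{x\}$ keeps multiplicity $m$ and Frobenius number $\F(S)$ since $m<x<\F(S)$. What your route buys is independence from Proposition~\ref{prop:6} and from the $|\SG(S)|\le 1$ characterization of irreducibility, plus the slightly sharper conclusion that for $S\neq\{x\in\N:x\ge m\}\cup\{0\}$ the cardinality in question is exactly $1$, never $0$; the paper's version is shorter only because Proposition~\ref{prop:6} is already available and lets it read off the count case by case.
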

\begin{proof}
 (Necessity) If $S$ is a $m$-irreducible numerical semigroup, then, by Theorem \ref{theo:3} and Proposition \ref{prop:6}, one of the following condition holds:
\begin{enumerate}
\item If $S=\{x \in \N: x\ge m\} \cup \{0\}$, it is clear that $|\{x \in \SG(S): x>m\}| =0 \leq 1$.
\item If $S=\{x \in \N: x\ge m, x\neq \F(S)\} \cup \{0\}$ with $m<\F(S)<2m$, then, clearly  $|\{x \in \SG(S): x>m\}| = |\{\F(S)\}| = 1$.
\item If $S$ is irreducible and $\F(S)>2m$, then by Lemma \ref{lemma:12}, $|\{x \in \SG(S): x>m\}| \leq 1$.
\end{enumerate}

(Sufficiency) If $S$ is not an $m$-irreducible numerical semigroup then there exist $S_1, S_2 \in \sem(m)$ such that $S \varsubsetneq S_1$, $S\varsubsetneq S_2$ and $S= S_1 \cap S_2$.
 For $i \in \{1, 2\}$, let $x_i = \max(S_i\backslash S)$. Because $S=S_1 \cap S_2$ we have that $x_1\neq x_2$ and by Lemma \ref{lemma:12}, we conclude that $|\{x \in \SG(S): x>m\}| \geq 2$ contradicting the hypothesis.
\end{proof}

In the rest of the section we denote by $T$ a numerical semigroup and $n \in T\backslash\{0\}$. The Apery set~\cite{apery} of $T$ with respect to $n$ is $\Ap(T,n)=\{t \in T: t-n \not\in T\}$. Our goal is to show how to determine $\SG(T)$ when $\Ap(T,n)$ is known,. This process will be useful for the rest of this paper.

It is well-known and easy to prove that $\Ap(T,n)=\{w(0), \ldots, w(n-1)\}$, where $w(i)$ is the least element in $T$ congruent with $i$ modulo $n$, If we denote by $a \mod b$ the remainder of division of $a$ by $b$,
 then a positive integer, $x$, is in $T$ if and only if $w(x \mod n)\leq x$ (see for instance \cite{springer}).

By using the notation introduced in \cite{jpaa}, an integer $x$ is a pseudo-Frobenius number of $T$ if $x \not\in T$ and $x+t \in T$ for all $t\in T\backslash \{0\}$. We denote by $\PF(T)$ the set of all pseudo-Frobenius number of $T$. The cardinality of the above set is an important invariant of $T$, called the \textit{type} of $T$ (see \cite{barucci}). The relationship between $PF(T)$ and $\SG(T)$ is shown in the following lemma, whose proof is immediate.

\begin{lem}
\label{lemma:14}
With the above conditions $\SG(T)=\{x \in \PF(T): 2x\in T\}$.
\end{lem}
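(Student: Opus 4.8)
The plan is to prove both inclusions by unwinding the definitions of $\SG(T)$ and $\PF(T)$ into explicit closure conditions on the set $T\cup\{x\}$. The key observation is that, for a gap $x\in\N\setminus T$ with $x>0$, the set $T\cup\{x\}$ automatically contains $0$ and has finite complement, so it is a numerical semigroup if and only if it is closed under addition. Checking this closure reduces to examining the only potentially problematic sums, namely $x+t$ for $t\in T\setminus\{0\}$ and $x+x=2x$, since any sum of two elements of $T$ already lies in $T$.

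First I would establish the inclusion $\SG(T)\subseteq\{x\in\PF(T):2x\in T\}$. Let $x\in\SG(T)$; then $x\notin T$ and $T\cup\{x\}$ is closed under addition. For any $t\in T\setminus\{0\}$ we have $x+t\in T\cup\{x\}$, and since $t>0$ forces $x+t>x$, we conclude $x+t\in T$; together with $x\notin T$ this says exactly that $x\in\PF(T)$. Applying closure to the sum $x+x$ and using $2x\neq x$ gives $2x\in T$. Hence $x$ lies in the right-hand set.

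For the reverse inclusion I would take $x\in\PF(T)$ with $2x\in T$ and verify directly that $T\cup\{x\}$ is a numerical semigroup. Since $x\in\PF(T)$ we have $x\notin T$, so $x$ is a genuine gap. To check closure under addition I would run through the sums of two elements $a,b\in T\cup\{x\}$: if both lie in $T$, the sum is in $T$; if $a=x$ and $b=t\in T$, then either $t=0$, in which case the sum is $x$, or $t\neq 0$, in which case the sum is $x+t\in T$ by the defining property of a pseudo-Frobenius number; and if $a=b=x$, the sum is $2x\in T$ by hypothesis. Thus $T\cup\{x\}$ is closed, hence a numerical semigroup, so $x\in\SG(T)$.

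This argument is essentially a definition chase and presents no serious obstacle; the only point requiring care is to notice that the condition $2x\in T$ genuinely cannot be dropped. The defining property of a pseudo-Frobenius number controls only the sums $x+t$ with $t\in T\setminus\{0\}$ and says nothing about $x+x$, precisely because $x\notin T$. Isolating this single sum as a separate requirement is exactly what produces the extra clause $2x\in T$ in the characterization, and I would make sure to flag this in the write-up so the role of $2x$ is not obscured.
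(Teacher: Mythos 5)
Your argument is correct: both inclusions follow exactly as you describe from unwinding the definitions of special gap and pseudo-Frobenius number, with the sum $x+x$ isolated as the one case not controlled by the pseudo-Frobenius condition. The paper omits the proof entirely (calling it immediate), and your definition chase is precisely the intended argument.
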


Over $\N$, we can define the following order relation:
$$
a \leq_T b \text{ if } b-a\in T
$$
The following result appears in \cite{jpaa} and characterizes the set of pseudo-Frobenius numbers of a numerical semigroup in terms of one of its Ap\'ery sets.

\begin{lem}
\label{lemma:15}
 With the above notation $\PF(T)=\{w-n : w \in \Maximals_{\leq_T} \Ap(T, n)\}$.
\end{lem}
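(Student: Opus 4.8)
The plan is to prove both inclusions directly, relying on one elementary observation that is immediate from the definition of the Apery set: \emph{for any $c\in T$, we have $c-n\in T$ if and only if $c\notin\Ap(T,n)$.} Indeed, $\Ap(T,n)=\{u\in T: u-n\notin T\}$, so for $c\in T$ the condition $c\in\Ap(T,n)$ is exactly $c-n\notin T$, and the contrapositive gives the claim. This equivalence is the engine of the whole argument, since it converts membership questions about shifted elements into membership questions about $\Ap(T,n)$, where the order $\leq_T$ lives.

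For the inclusion $\PF(T)\subseteq\{w-n: w\in\Maximals_{\leq_T}\Ap(T,n)\}$, I would start from $x\in\PF(T)$ and show that $w:=x+n$ is a $\leq_T$-maximal element of $\Ap(T,n)$, so that $x=w-n$ has the required form. First, $x+n\in T$ because $n\in T\setminus\{0\}$ and $x$ is a pseudo-Frobenius number, while $(x+n)-n=x\notin T$; hence $x+n\in\Ap(T,n)$. For maximality, suppose $x+n\leq_T w'$ with $w'\in\Ap(T,n)$ and $w'\neq x+n$. Then $t:=w'-(x+n)\in T\setminus\{0\}$, and $w'-n=x+t\in T$ by the defining property of $x\in\PF(T)$; this contradicts $w'\in\Ap(T,n)$. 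Therefore $x+n$ is maximal and $x$ lies in the right-hand set.

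For the reverse inclusion, I would take $w\in\Maximals_{\leq_T}\Ap(T,n)$, set $x:=w-n$, and verify $x\in\PF(T)$. Since $w\in\Ap(T,n)$ we have $x=w-n\notin T$. Given any $t\in T\setminus\{0\}$, put $c:=w+t\in T$; then $c-w=t\in T$ with $c\neq w$, so by maximality of $w$ we cannot have $c\in\Ap(T,n)$. The observation above then yields $c-n\in T$, that is, $x+t=w+t-n=c-n\in T$. As $t$ was arbitrary, $x+t\in T$ for all $t\in T\setminus\{0\}$, so $x\in\PF(T)$.

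I do not expect a genuine obstacle: once the equivalence ``$c-n\in T\iff c\notin\Ap(T,n)$'' is isolated, both directions reduce to a one-line manipulation combined with the maximality hypothesis. The only points requiring care are the bookkeeping that the shifted quantities land in $T\setminus\{0\}$, so that the pseudo-Frobenius property and the strict maximality can be invoked, and the fact that every $c=w+t$ with $t\in T$ automatically lies in $T$, which is precisely what allows the key equivalence to apply.
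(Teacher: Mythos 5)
Your proof is correct and complete: the key equivalence ``for $c\in T$, $c-n\in T$ iff $c\notin\Ap(T,n)$'' is exactly what is needed, and both inclusions are carried out without gaps (in particular you correctly check that $x+n\in\Ap(T,n)$ and that the shifts $t=w'-(x+n)$ and $c=w+t$ land where the pseudo-Frobenius and maximality hypotheses apply). The paper itself gives no proof of this lemma, merely citing \cite{jpaa}; your argument is the standard direct one found there, so there is nothing to contrast.
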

Note that $w(i)\in \Maximals_{\leq_T} \Ap(T, n)$ if and only if $w(k)-w(i) \not\in \Ap(T,n)$ for all $k\neq i$.

We finish this section by giving an algorithm that allows to determine the special gaps of a numerical semigroup when the Ap\'ery set with respect a non-zero element of the semigroup of is given. Algorithm \ref{alg:sg} shows a pseudocode for this computation.

\vspace*{0.3cm}
\begin{algorithm2e}[H]
\label{alg:sg}

\SetKwInOut{Input}{Input}
\SetKwInOut{Output}{Output}
\Input{A numerical semigroup $T$ and its Ap\'ery set with respect to $n\in T\backslash\{0\}$, $\Ap(T, n)=\{w(0), \ldots, w(n-1)\}$.}

Compute $M=\{w(i)-n: w(k)-w(i) \not\in \Ap(T,n), \text{ for all } k\neq i\}$.

\Output{$\SG(T)=\{m \in M: 2m \ge w(2m \mod n)\}$.}
\caption{Computing the special gaps when an Ap\'ery set is given.}
\end{algorithm2e}

Next, we illustrate the usage of the above algorithm in a simple example.
\begin{ex}
\label{example:17}
 Let us compute the special gaps of the numerical semigroup $S=\{0, 5,$ $7, 9,$ $10, 12,$ $14, \rightarrow\}$ by using Algorithm \ref{alg:sg}. It is clear that $\Ap(S, 5)=\{0,7,9,16,18\}$. Then, $M=\{16-5=11, 18-5=13\}$ and $\SG(S)=\{11,13\}$.
\end{ex}

\section{The oversemigroups of multiplicity $m$ of a numerical semigroup}
\label{sec:4}
Let $S$ be a numerical semigroups with multiplicity $m$. We denote by $\mathcal{O}_m(S)=\{S'\in \sem(m): S\subseteq S'\}$ the set of oversemigroups with multiplicity $m$ of $S$.

The elements in $\mathcal{O}_m(S)$ can be obtained recursively. The main idea for this construction is the following result that can be directly obtained from Lemma \ref{lemma:12}.

\begin{lem}
 \label{lemma:18}
Let $S$ and $S'$ be elements in $\sem(m)$ such that $S\varsubsetneq S'$ and let $x=\max (S'\backslash S)$. Then $S \cup \{x\} \in \sem(m)$.
\end{lem}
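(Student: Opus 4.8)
The plan is to read the membership $S \cup \{x\} \in \sem(m)$ off the hypotheses in two stages: first that $S \cup \{x\}$ is a numerical semigroup at all, and then that its multiplicity is exactly $m$. The first stage is essentially already packaged in Lemma \ref{lemma:12}(2), so the only genuine content is a short piece of bookkeeping on the multiplicity, which is why the result can be obtained directly.

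First I would apply Lemma \ref{lemma:12}(2) with $T = S'$. Since $S \varsubsetneq S'$ and $x = \max(S' \backslash S)$, the lemma yields $x \in \SG(S)$. By the very definition of a special gap, this means precisely that $S \cup \{x\}$ is again a numerical semigroup; in particular it contains $0$, is closed under addition, and has finite complement in $\N$. No further work is needed for this stage.

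It then remains to verify that $\m(S \cup \{x\}) = m$, and the key observation is to locate $x$ strictly above $m$. Since $0 \in S$ and $x \notin S$, we have $x \neq 0$, so $x$ is a positive element of $S'$; because $\m(S') = m$, this forces $x \geq m$. On the other hand $m \in S$ (as $m = \m(S)$) while $x \notin S$, so $x \neq m$, and hence $x > m$. Consequently every positive element of $S \cup \{x\}$ is at least $m$, while $m$ itself lies in $S \cup \{x\}$, so the least positive element is still $m$. This gives $\m(S \cup \{x\}) = m$ and therefore $S \cup \{x\} \in \sem(m)$. The statement is short enough that I do not anticipate a real obstacle; the only point requiring care is the strict inequality $x > m$ (rather than merely $x \geq m$), since it is exactly this that prevents adjoining $x$ from lowering the multiplicity.
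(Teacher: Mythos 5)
Your proof is correct and follows exactly the route the paper indicates: the paper states that Lemma \ref{lemma:18} ``can be directly obtained from Lemma \ref{lemma:12}'', and your argument applies Lemma \ref{lemma:12}(2) to get $x\in\SG(S)$ and then checks that the multiplicity is preserved via $x>m$. The multiplicity bookkeeping you spell out is the only detail the paper leaves implicit, and you handle it correctly.
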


Given two numerical semigroups $S$ and $S'$ in $\sem(m)$ with $S\varsubsetneq S'$, we define recursively the following sequence of elements in $\sem(m)$:
\begin{itemize}
 \item $S_0=S$.
\item $S_{n+1} = \left\{ \begin{array}{cl}
                          S_n & \mbox{if $S_n = S'$},\\
			  S_n \cup \{\max (S'\backslash S_n)\} & \mbox{ otherwise.}
                         \end{array}\right.$
\end{itemize}
It is clear that if $|S'\backslash S|=k$, then $S=S_0 \varsubsetneq S_1 \varsubsetneq \cdots \varsubsetneq S_k = S'$.

With this idea it is not difficult to design an algorithm that allows to compute all the elements in $\mathcal{O}_m(S)$ for a given numerical semigroup with multiplicity $m$, $S$. The main idea for this procedure is that if $S'\in\mathcal{O}_m(S)$ (we start with $S'=S$) and $\{x_1, \ldots,, x_r\}=\{x \in \SG(S'): x>m\}$ then $S' \cup \{x_1\}, \ldots, S' \cup \{x_r\}$ are also in $\mathcal{O}_m(S)$. Algorithm \ref{alg:over} shows a pseudocode for this procedure.

\vspace*{0.3cm}

\begin{algorithm2e}[H]
\label{alg:over}
\SetKwInOut{Input}{Input}
\SetKwInOut{Output}{Output}
\SetKwInOut{Initialization}{Initialization}
\Input{A numerical semigroup $S$ with multiplicity $m$}

\Initialization{$A=\{S\}$ and $B=\{S\}$.}

\While{$B\neq \emptyset$}{
\begin{itemize}
 \item For each $S'\in B$ compute $D(S')=\{x \in \SG(S'): x>m\}$.
\item  Set $A := A \cup \{\bigcup_{S'\in B} \{S' \cup \{x\}: x \in D(S')\}\}$.
\item Set $B:= \bigcup_{S'\in B} \{S' \cup \{x\}: x \in D(S')\}$.
\end{itemize}}

\Output{$A=\mathcal{O}_m(S)$.}
\caption{Computing the oversemigroups with multiplicity $m$ of a numerical semigroup.}
\end{algorithm2e}

\vspace*{0.3cm}

Observe that the difficulty of the above algorithm  lies on the computation of $\{x \in \SG(S'): x>m\}$.
 Recall that if we know the Ap\'ery set $\Ap(S, m)$, by using Algorithm \ref{alg:sg} we can easily compute the set $\{x\in \SG(S'): x>m\}$.

The following result states that if we know $\Ap(S, m)$, then we can also compute $\Ap(S', m)$ for all those oversemigroups $S'$ that appears when we run Algorithm \ref{alg:over}.

\begin{lem}
 \label{lemma:20}
Let $T \in \sem(m)$ and $\Ap(T, m)=\{w(0), \ldots, w(m-1)\}$. If $x \in \SG(T)$, then
$$
\Ap(T \cup \{x\}, m) = (\Ap(T,m) \backslash \{ w(x \mod m)\}) \cup \{w(x \mod m) - m\}
$$
\end{lem}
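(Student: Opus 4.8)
The plan is to prove Lemma~\ref{lemma:20} by directly verifying that the proposed set is indeed the Ap\'ery set of $T \cup \{x\}$ with respect to $m$. Recall that $\Ap(T,m) = \{w(0), \ldots, w(m-1)\}$, where $w(i)$ is the least element of $T$ congruent to $i$ modulo $m$. First I would observe that since $x \in \SG(T)$, by definition $x \notin T$ but $T \cup \{x\}$ is a numerical semigroup; moreover $x$ is a pseudo-Frobenius number of $T$, so $x + t \in T$ for all $t \in T \setminus \{0\}$. The key preliminary fact I would establish is that $x = w(x \bmod m) - m$. Indeed, writing $j = x \bmod m$, we have $x \equiv j \pmod m$ and $x \notin T$, so $x < w(j)$; since $w(j) - m \equiv j \pmod m$, either $w(j) - m \in T$ (impossible, as then $w(j)$ would not be minimal) or $w(j) - m \notin T$. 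Because $x + m \in T$ (as $m \in T \setminus \{0\}$ and $x \in \SG(T) \subseteq \PF(T)$) and $x + m \equiv j \pmod m$, minimality of $w(j)$ forces $w(j) \leq x + m$, i.e. $w(j) - m \leq x$; combined with $x < w(j)$ and the congruence this yields $x = w(j) - m$.

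With that identity in hand, the main step is to compute the new Ap\'ery set. For each residue $i \neq j$, I would argue that the least element of $T \cup \{x\}$ congruent to $i$ is unchanged, namely $w(i)$: adding the single element $x$ (which lies in residue class $j$) cannot introduce a smaller representative in any other class, so $w_{T \cup \{x\}}(i) = w(i)$. For the class $j$, the element $x = w(j) - m$ now belongs to $T \cup \{x\}$ and is congruent to $j$, and it is smaller than $w(j)$, so it becomes the new least element in that class: $w_{T \cup \{x\}}(j) = w(j) - m$. Hence the new Ap\'ery set replaces $w(j) = w(x \bmod m)$ by $w(x \bmod m) - m$ and keeps everything else, which is exactly the claimed formula. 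I should also check that $w(j) - m$ genuinely satisfies the Ap\'ery condition in the new semigroup, i.e. that $(w(j) - m) - m = x - m \notin T \cup \{x\}$; this holds because $x - m \notin T$ (otherwise $x = (x-m) + m \in T$, contradicting $x \notin T$) and $x - m \neq x$.

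The step I expect to be the main obstacle is pinning down the identity $x = w(x \bmod m) - m$ cleanly, since everything downstream depends on it. The subtlety is ruling out the possibility that $x$ is strictly smaller than $w(j) - m$ (i.e. that $x + m$ is not itself the minimal representative of class $j$ in $T$). This is where I would lean on the pseudo-Frobenius property: because $x + t \in T$ for every nonzero $t \in T$ and in particular $x + m \in T$, no element of class $j$ lying strictly between $x$ and $x + m$ can exist, and $x + m$ must be the minimal nonnegative representative of its class in $T$, forcing $w(j) = x + m$. Once this is secured, the remaining verifications are routine consequences of the characterization of membership via Ap\'ery sets recalled just before Lemma~\ref{lemma:15}.
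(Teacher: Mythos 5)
Your proof is correct and follows essentially the same route as the paper: the paper's entire proof is the observation that $x = w(x \bmod m) - m$ (cited from Lemma~\ref{lemma:15}), after which the update of the Ap\'ery set is immediate. You establish that same identity directly from the pseudo-Frobenius property of $x$ (using $x+m \in T$ together with the minimality of $w(x \bmod m)$) and then carry out the routine class-by-class verification that the paper leaves to the reader.
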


\begin{proof}
 Note that by Lemma \ref{lemma:15} we know that $x=w(x \mod m) -m$.
\end{proof}

Observe that a numerical semigroup with multiplicity $m$, $S$, is completely determined by $\Ap(S,m)$. Hence, $S$ is also completely determined by a $(m-1)$-tuple $(w(1), \ldots, w(m-1))$ in $\N^{m-1}$ where $w(i)$ is the least element in $S$ congruent with $i$ modulo $m$. We will call in the rest of the paper to
 $(w(1), \ldots, w(m-1))$ the \textit{coordinates} of $S$.

We denote by $[m-1]=\{1, \ldots, m-1\}$, and for each $i \in [m-1]$ by $e_i$, the $(m-1)$-tuple having a $1$ as its $i$th entry and zeros otherwise. With this notation, the following result
is a reformulation of Lemma \ref{lemma:20}.

\begin{lem}
 \label{lemma:21}
Let $T \in \sem(m)$ and $x\in \SG(T)$ such that $x>m$. If $(x_1, \ldots, x_{m-1})$ are the coordinates of $T$, then $(x_1, \ldots, x_{m-1})-m\cdot e_{x\mod m}$ are the  coordinates of $T \cup \{x\}$.
\end{lem}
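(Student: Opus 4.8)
The plan is to show that Lemma~\ref{lemma:21} is just Lemma~\ref{lemma:20} rewritten in the coordinate notation, so the main task is a careful bookkeeping argument connecting the two descriptions. First I would recall that the coordinates $(x_1,\ldots,x_{m-1})$ of $T$ are by definition $x_i=w(i)$, the least element of $T$ congruent with $i$ modulo $m$, together with the implicit $w(0)=0$. Thus the full Ap\'ery set is $\Ap(T,m)=\{0,x_1,\ldots,x_{m-1}\}$, and passing between ``coordinates'' and ``Ap\'ery set'' is purely a matter of dropping or reinstating the zeroth entry.

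Next I would invoke Lemma~\ref{lemma:20}, which tells us exactly how the Ap\'ery set changes when we adjoin a special gap: writing $j=x\bmod m$, we have
$$
\Ap(T\cup\{x\},m)=\bigl(\Ap(T,m)\setminus\{w(j)\}\bigr)\cup\{w(j)-m\}.
$$
Since $x>m$, the residue $j$ lies in $[m-1]$ (if $j=0$ then $x$ would be a positive multiple of $m$, hence already in $T$, contradicting $x\in\SG(T)\subseteq\N\setminus T$), so the entry being modified is a genuine coordinate $x_j=w(j)$ and not the zeroth entry. The effect on the Ap\'ery set is therefore to replace the single value $w(j)$ by $w(j)-m$ while leaving every other $w(i)$ untouched. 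Translating back to coordinates, the $j$th coordinate drops from $x_j$ to $x_j-m$ and all others are unchanged, which is precisely the vector $(x_1,\ldots,x_{m-1})-m\cdot e_{x\bmod m}$.

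The one point requiring a little care — and the closest thing to an obstacle — is verifying that $w(j)-m$ is indeed the least element of $T\cup\{x\}$ in its residue class, i.e.\ that the new value functions as a legitimate coordinate. This is where the hypothesis $x\in\SG(T)$ does the work: by the remark following Lemma~\ref{lemma:15}, together with the characterization of special gaps, one has $x=w(j)-m$, so adjoining $x$ to $T$ introduces exactly the element $w(j)-m$ into the class $j$, and it is smaller than the previous minimum $w(j)$ by exactly $m$. Hence it becomes the new least element of that class, confirming that the updated tuple really is the coordinate vector of $T\cup\{x\}$. I would then conclude that the two lemmas express the same transformation, completing the proof.
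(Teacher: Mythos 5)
Your proposal is correct and follows the same route as the paper, which likewise treats Lemma~\ref{lemma:21} as a direct coordinate-level restatement of Lemma~\ref{lemma:20}, resting on the key identity $x=w(x\bmod m)-m$ coming from Lemmas~\ref{lemma:14} and~\ref{lemma:15}. You in fact supply more detail than the paper does (e.g.\ checking that $x\bmod m\neq 0$ and that $w(j)-m$ really is the new minimum in its residue class), all of it accurate.
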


The reader can easily check that the following algorithm computes, from the coordinates of a numerical semigroup with multiplicity $m$, $S$, the set of all the coordinates of the elements in $\mathcal{O}_m(S)$.

\vspace*{0.3cm}

\begin{algorithm2e}[h]
\label{alg:coord}
\SetKwInOut{Input}{Input}
\SetKwInOut{Output}{Output}
\SetKwInOut{Initialization}{Initialization}
\Input{The coordinates $x=(x_1, \ldots, x_{m-1})$ of a numerical semigroup $S$ with multiplicity $m$}

\Initialization{$A=\{x\}$ and $B=\{x\}$.}

\While{$B\neq \emptyset$}{
\begin{itemize}
 \item For each $y=(y_1, \ldots, y_{m-1})\in B$ compute

 \noindent$D(y)=\{i \in [m-1]: y_i>2m \text{ and } y_k-y_i \not\in \{y_1, \ldots, y_{m-1}\} \text{  for all } k \in [m-1]\}$.
\item  Set $A := A \cup \{\bigcup_{y \in B} \{y - m\cdot e_i: i \in D(y)\}\}$.
\item Set $B:= \bigcup_{y \in B} \{y - m\cdot e_i: i \in D(y)\}$.
\end{itemize}}

\Output{$A=$ Coordinates of $\mathcal{O}_m(S)$.}
\caption{The coordinates of all the oversemigroups with multiplicity $m$ of a numerical semigroup.}
\end{algorithm2e}

We finish the section by illustrating the above algorithm with an example.

\begin{ex}
 \label{example:23}
Let $S=\{0,5,7,9,10,12,14, \rightarrow\}$. It is clear that $S$ is a numerical semigroup with multiplicity $5$ and that $(16, 7, 18,9)$ are its coordinates.
\begin{itemize}
 \item We start by initializing $A=\{(16, 7,18,9)\}$ and $B=\{(16, 7,18,9)\}$.

 \noindent$D(16,7,18,9) = \{1,3\}$.
\item $A=\{(16, 7,18,9), (11,7,18,9), (16, 7, 13, 9)\}$ and $B=\{(11,7,18,9), (16, 7,13,9)\}$.

 \noindent$D(11,7,18,9) = \{3\}$ and $D(16, 7, 13, 9)=\{1,3\}$.
\item $A=\{(16, 7,18,9), (11,7,18,9), (16, 7, 13, 9), (11,7,13,9), (16,7,8,9)\}$ and \\
$B=$ $\{(11,7,13,9),$ $(16, 7,8,9)\}$.

 \noindent$D(11,7,13,9) = \{1,3\}$ and $D(16, 7, 8, 9)=\{1\}$.
\item $A=\{(16, 7,18,9), (11,7,18,9), (16, 7, 13, 9), (11,7,13,9)$, $(16,7,8,9),$ $(6, 7, 13, 9),$ $(11, 7, 8, 9)\}$ and $B=\{(6,7,13,9), (11, 7,8,9)\}$.

 \noindent$D(6,7,13,9) = \{3\}$ and $D(11, 7, 8, 9)=\{1\}$.
\item $A=\{(16, 7,18,9), (11,7,18,9), (16, 7, 13, 9), (11,7,13,9),$ $(16,7,8,9),$ $(6, 7, 13, 9),$ $(11, 7, 8, 9),$ $(6,7,8,9)\}$ and $B=\{(6,7,8,9)\}$.

 \noindent$D(6,7,8,9) = \emptyset$.
\item $A=\{(16, 7,18,9), (11,7,18,9), (16, 7, 13, 9), (11,7,13,9),$ $(16,7,8,9),$ $(6, 7, 13, 9),$ $(11, 7, 8, 9),$ $(6,7,8,9)\}$ and $B= \emptyset$.
\end{itemize}
And then, the coordinates of all the oversemigroups with multiplicity $5$ of $S$ are
$\{(16, 7,18,9),$ $(11,7,18,9),$ $(16, 7, 13, 9),$ $(11,7,13,9),$ $(16,7,8,9),$ $(6, 7, 13, 9),$ $(11, 7, 8, 9),$ $(6,7,8,9)\}$
\end{ex}

\section{Decomposition into $m$-irreducible numerical semigroups}
\label{sec:5}
Let $S$ be a numerical semigroup with multiplicity $m$. We denote by $\mathcal{J}_m(S) = \{S'\in \mathcal{O}_m(S): S' \text{ is $m$-irreducible}\}$. From Proposition \ref{prop:1} we deduce that $S=\bigcap_{S'\in \mathcal{J}_m(S)} S'$. The following result has an immediate proof.

\begin{lem}
 \label{lemma:24}
Let $S \in \sem(m)$ and $\{S_1, \ldots, S_n\}$ the set of all the minimal elements (with respect to the inclusion ordering) of $\mathcal{J}_m(S)$. Then $S=S_1 \cap \cdots \cap S_n$.
\end{lem}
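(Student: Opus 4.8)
The plan is to combine the decomposition $S = \bigcap_{S' \in \mathcal{J}_m(S)} S'$, noted just before the statement as a consequence of Proposition \ref{prop:1}, with the elementary order-theoretic fact that in a finite poset every element lies above some minimal element. So the whole argument reduces to a formal manipulation of intersections, once finiteness is in place.

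First I would check that $\mathcal{J}_m(S)$ is finite, so that its minimal elements are well defined and the expression $S_1 \cap \cdots \cap S_n$ makes sense. Indeed, every $S' \in \mathcal{O}_m(S)$ satisfies $S \subseteq S'$, hence $\G(S') \subseteq \G(S)$; since $\G(S)$ is a finite set, only finitely many such $S'$ exist, so $\mathcal{O}_m(S)$ and therefore $\mathcal{J}_m(S) \subseteq \mathcal{O}_m(S)$ are finite. This legitimizes talking about minimal elements at all.

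The inclusion $S \subseteq S_1 \cap \cdots \cap S_n$ is immediate: each $S_i$ belongs to $\mathcal{J}_m(S) \subseteq \mathcal{O}_m(S)$, so $S \subseteq S_i$ for every $i$. For the reverse inclusion I would exploit finiteness as follows. Given any $T \in \mathcal{J}_m(S)$, one can descend from $T$ through strictly smaller elements of $\mathcal{J}_m(S)$ until no further descent is possible; since $\mathcal{J}_m(S)$ is finite, this terminates at some minimal element $S_i$ with $S_i \subseteq T$. Consequently $S_1 \cap \cdots \cap S_n \subseteq S_i \subseteq T$. As this holds for every $T \in \mathcal{J}_m(S)$, we get
$$
S_1 \cap \cdots \cap S_n \subseteq \bigcap_{T \in \mathcal{J}_m(S)} T = S,
$$
the last equality being the decomposition recalled above. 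Combining the two inclusions yields $S = S_1 \cap \cdots \cap S_n$.

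There is no genuine obstacle here; the only point requiring care is confirming that $\mathcal{J}_m(S)$ is finite, which is precisely what justifies both the notion of a minimal element and the descent step. Everything else is routine manipulation of meets in the finite semilattice $\sem(m)$.
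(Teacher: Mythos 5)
Your proof is correct and is exactly the ``immediate proof'' the paper omits: finiteness of $\mathcal{O}_m(S)$ (hence of $\mathcal{J}_m(S)$) guarantees that every element of $\mathcal{J}_m(S)$ contains a minimal one, so the intersection over the minimal elements coincides with the intersection over all of $\mathcal{J}_m(S)$, which the paper has already identified with $S$ via Proposition~\ref{prop:1}. No discrepancy with the paper's approach.
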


As a consequence of the results in Section \ref{sec:3} and Theorem \ref{theo:13} we have justified the following algorithm that computes the minimal elements in $\mathcal{J}_m(S)$.

\vspace*{0.3cm}

\begin{algorithm2e}[H]
\label{alg:min}
\SetKwInOut{Input}{Input}
\SetKwInOut{Output}{Output}
\SetKwInOut{Initialization}{Initialization}
\Input{A numerical semigroup $S$ with multiplicity $m$}

\Initialization{$A=\{S\}$ and $B=\emptyset$.}

\While{$A\neq \emptyset$}{
For each $S' \in A$ compute $D(S')=\{x \in \SG(S'): x>m\}$.

\eIf{$|D(S')|\leq 1$}{Set $B:=B \cup \{S'\}$ and $A:=A\backslash \{S'\}$.}{Set $A:=(A\backslash\{S'\}) \cup \{S'\cup \{x\}: x \in D(S') \text{ and } S'\cup\{x\} \text{ does not contain any element of $B$}\}$.}}

\Output{$B= \Minimals(\mathcal{J}_m(S))$}
\caption{Computation of $\Minimals(\mathcal{J}_m(S))$.}
\end{algorithm2e}

\vspace*{0.3cm}

We illustrate the above algorithm with the following example. Note that if $S$ and $T$ are elements in $\sem(m)$ with coordinates $x$ and $y$, respectively, then $S\subseteq T$ if and only if $y \leq x$.

\begin{ex}
 \label{example:26}
Let $S=\{0, 5, 7, 9, 10, 12, 14, \rightarrow\}$ the numerical semigroup given in Example \ref{example:23}. We compute the set $\Minimals(\mathcal{J}_m(S))$.  Recall that $(16, 7, 18, 9)$ are the coordinates of $S$. Then, by running Algorithm \ref{alg:min} we obtain:
\begin{itemize}
 \item $A=\{(16, 7,18,9)\}$ and $B=\emptyset$. $D(16,7,18,9)=\{11,13\}$.
\item  $A=\{(11, 7,18,9), (16, 7, 13, 9)\}$ and $B=\emptyset$. $D(11,7,18,9)=\{13\}$.
\item  $A=\{(16, 7, 13, 9)\}$ and $B=\{(11, 7, 18, 9)\}$. $D(16,7,13,9)=\{11, 8\}$.
\item  $A=\{(16, 7, 8, 9)\}$ and $B=\{(11, 7, 18, 9)\}$. $D(16,7,8,9)=\{11\}$.
\item $A=\emptyset$ and $B=\{(11,7,18, 9), (16, 7, 8, 9)\}$.
\end{itemize}
\end{ex}

Note that the decomposition described in Lemma \ref{lemma:24} is not necessarily minimal, in the sense that the smallest number of $m$-irreducible numerical semigroups are taking part of the decomposition. An example of this fact is shown in Example \ref{example:31}.

To compute the minimal decomposition are necessary the two following results.

\begin{lem}
 \label{lemma:27}
Let $S \in \sem(m)$. If $S=S_1\cap \cdots S_n$ with $S_1, \ldots, S_m \in \mathcal{J}_m(S)$, then there exist $S_1', \ldots, S_n' \in \Minimals(\mathcal{J}_m(S))$ such that $S=S_1'\cap \cdots \cap S_n'$.
\end{lem}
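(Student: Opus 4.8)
The plan is to replace each $S_i$ by a minimal element of $\mathcal{J}_m(S)$ sitting below it, and then to check that this sandwiching leaves the intersection unchanged. First I would record the structural fact that makes everything work: $\mathcal{J}_m(S)\subseteq\mathcal{O}_m(S)$ is a \emph{finite} poset under inclusion. Indeed, every $S'\in\mathcal{O}_m(S)$ satisfies $S\subseteq S'$, so $\G(S')\subseteq\G(S)$; since $\G(S)$ is finite there are only finitely many possibilities for $\G(S')$, hence $\mathcal{O}_m(S)$ is finite. In a finite poset every element has at least one minimal element below it, so for each $i\in\{1,\ldots,n\}$ I can choose $S_i'\in\Minimals(\mathcal{J}_m(S))$ with $S_i'\subseteq S_i$.

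Next I would verify the two inclusions that together yield $S=S_1'\cap\cdots\cap S_n'$. On one side, each $S_i'$ lies in $\mathcal{J}_m(S)\subseteq\mathcal{O}_m(S)$, so $S\subseteq S_i'$ for every $i$, giving $S\subseteq S_1'\cap\cdots\cap S_n'$. On the other side, from $S_i'\subseteq S_i$ we obtain $S_1'\cap\cdots\cap S_n'\subseteq S_1\cap\cdots\cap S_n=S$. Combining the two inclusions gives the claimed equality, with each $S_i'$ a minimal element of $\mathcal{J}_m(S)$ as required.

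There is essentially no hard step in this argument; the only point meriting explicit attention is that the replacement semigroups $S_i'$ must still be oversemigroups of $S$, so that shrinking the factors cannot push the intersection strictly below $S$. This is automatic from $S_i'\in\mathcal{J}_m(S)\subseteq\mathcal{O}_m(S)$, which is exactly why choosing the $S_i'$ inside $\mathcal{J}_m(S)$ (rather than among arbitrary subsemigroups) is the correct move. Thus the finiteness of $\mathcal{O}_m(S)$, which guarantees the existence of the minimal elements $S_i'$, is the single structural fact the proof relies on.
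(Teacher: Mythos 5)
Your proof is correct and takes essentially the same approach as the paper, whose proof consists of the single instruction to replace each $S_i$ by a minimal element of $\mathcal{J}_m(S)$ contained in it. You have merely spelled out the finiteness and inclusion checks that the authors leave implicit.
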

\begin{proof}
 Take, for each $i \in \{1, \ldots, n\}$ $S'_i$ the element in $\Minimals(\mathcal{J}_m(S))$ such that $S_i'\subseteq S_i$.
\end{proof}

\begin{lem}
 \label{lemma:28}
Let $S\in \sem(m)$ and $S_1, \ldots, S_n \in \mathcal{O}_m(S)$. Then, $S=S_1 \cap \cdots \cap S_n$ if and only if for all $h \in \{x \in \SG(S): x>m\}$ there exists $i \in \{1, \ldots, n\}$ such that $h \not\in S_i$.
\end{lem}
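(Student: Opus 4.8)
The plan is to prove the equivalence by treating the two implications separately, leaning throughout on the elementary observation that, since each $S_i$ is an oversemigroup of $S$ with multiplicity $m$, we have $S \subseteq S_i$ for every $i$, and hence $S \subseteq T$ where $T := S_1 \cap \cdots \cap S_n$. Moreover $T \in \sem(m)$, because $T$ contains $m$ but none of $1, \ldots, m-1$. Thus the whole question reduces to deciding when the inclusion $S \subseteq T$ is actually an equality, and the gap condition is exactly the certificate for this.

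For the necessity direction I would argue via the (near-trivial) contrapositive built into the definition of a special gap. Assuming $S = T$, take any $h \in \SG(S)$ with $h > m$. Since every special gap is in particular a gap, $h \notin S = T = S_1 \cap \cdots \cap S_n$, so there must be some index $i$ with $h \notin S_i$, which is precisely what is required.

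The more substantial direction is sufficiency, and here the main tool will be Lemma \ref{lemma:12}(2). Assuming the gap condition, I would suppose toward a contradiction that $S \varsubsetneq T$ and set $x = \max(T \setminus S)$. Applying Lemma \ref{lemma:12}(2) to the chain $S \varsubsetneq T$ gives $x \in \SG(S)$. The one piece of bookkeeping that makes the argument go through is checking $x > m$: since $x \in T$ we have $x \geq m$, and since $m \in S$ while $x \notin S$ we get $x \neq m$, so $x > m$. Now the gap hypothesis supplies an index $i$ with $x \notin S_i$, contradicting $x \in T \subseteq S_i$. Hence no such $x$ exists, $T \setminus S = \emptyset$, and $S = T$. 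I do not expect any real obstacle; the only subtlety worth flagging is that one must exhibit $x$ not merely as a gap but as a \emph{special} gap strictly larger than $m$, which is exactly why both Lemma \ref{lemma:12}(2) and the multiplicity argument are needed.
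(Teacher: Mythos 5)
Your proof is correct and follows exactly the route the paper intends: the paper's proof is the single line ``it is a direct consequence of Lemma \ref{lemma:12}'', and your argument (trivial necessity, plus applying Lemma \ref{lemma:12}(2) to $x=\max(T\backslash S)$ and checking $x>m$ via the multiplicity) is precisely the fleshed-out version of that. No issues.
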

\begin{proof}
 It is a direct consequence of Lemma \ref{lemma:12}.
\end{proof}

\begin{rem}
 \label{remark:29}
Note that as a direct consequence of the above lemma we have that  if $S \in \sem(m)$, then $S$ can be expressed as an intersection less or equal than $|\{x \in \SG(S): x>m\}|$ $m$-irreducible numerical semigroups. In fact, by Lemma \ref{lemma:14} and Corollary 1.23 in \cite{springer} we get that we can decompose $S$ into less or equal that $m-1$ $m$-irreducible numerical semigroups.
\end{rem}

Assume that $\Minimals(\mathcal{J}_m(S)) = \{S_1, \ldots, S_n\}$. For each $i \in \{1, \ldots, n\}$, let $P(S_i)=\{h \in \SG(S): h>m \text{ and } h\not\in S_i\}$. By Lemma \ref{lemma:28}, we know that $S=S_{i_{1}} \cap \cdots \cap S_{i_{r}}$ if and only if $P(S_{i_{1}}) \cup \cdots \cup P(S_{i_{r}}) = \{x \in \SG(S): x>m\}$. This comment and Lemma \ref{lemma:27} justify the following algorithm that allows to
 compute from a given numerical semigroup with multiplicity $m$, a minimal decomposition as intersection of $m$-irreducible numerical semigroups.

\vspace*{0.3cm}

\begin{algorithm2e}[H]
\label{alg:final}
\SetKwInOut{Input}{Input}
\SetKwInOut{Output}{Output}

\Input{A numerical semigroup $S$ with multiplicity $m$}

\begin{enumerate}
 \item Compute $\Minimals(\mathcal{J}_m(S))$ by Algorithm \ref{alg:min}.
\item For each $S'\in \Minimals(\mathcal{J}_m(S))$, compute $P(S')=\{h \in \SG(S): h>m \text{ and } h\not\in S'\}$.
\item Choose $A \subseteq \Minimals(\mathcal{J}_m(S))$ with minimal cardinality and such that $\bigcup_{S'\in A} P(S') = \{x \in \SG(S): x>m\}$.
\end{enumerate}

\Output{$A=\{S_1, \ldots, S_n\}$ such that $S=S_1 \cap \cdots \cap S_n$ is a minimal decomposition of $S$ in $m$-irreducible numerical semigroups.}
\caption{Computation of a minimal decomposition of a numerical semigroup with multiplicity $m$ in $m$-irreducible numerical semigroups.}
\end{algorithm2e}

\vspace*{0.3cm}

The following two examples shows the usage of the above methodology.

\begin{ex}
\label{example:31}
 Let $S = \{0, 5, 10, 11, 14, 15, 16, 19, 20, 21, 22, 24, \rightarrow\}$. The coordinates for $S$ are $x=(11, 22, 28, 14)$. By using Algorithm \ref{alg:min}, we obtain that the coordinates of the
elements in $\Minimals(\mathcal{J}_m(S))$ are
$$
(11, 22, 8, 14), (11, 22, 13, 9), (11, 17, 28, 14)
$$
and then, we have a decomposition into the three above $5$-irreducible numerical semigroups of $S$. However, if we apply Algorithm \ref{alg:final}:
\begin{itemize}
 \item $P(11, 22, 8, 14)=\{17\}$,
\item $P(11, 22, 13, 9)=\{17\}$,
\item $P(11, 17, 28, 14)=\{23\}$,
\end{itemize}
while $\{ x \in \SG(S): x>5\} =\{17, 23\}$, so $(11, 22, 13, 9)$ and  $(11, 17, 28, 14)$ are enough to decompose $S$ into $5$-irreducible numerical semigroups.
\end{ex}

In the above example, both $5$-irreducible numerical semigroups in the decomposition are also irreducible. In the next example, we show that it is not true in general.

\begin{ex}
\label{example:31}
 Let $S= \{0, 9, 17, 18, 24, \rightarrow\}$. The coordinates for $S$ are $x=(28, 29, 30, 31, 32, 24, 25, 17)$. By using Algorithm \ref{alg:min}, we obtain that the coordinates of the
elements in $\Minimals(\mathcal{J}_m(S))$ are:
\begin{center}
$( 10, 20, 21, 31, 14, 15, 16, 17),$ $( 10, 20, 12, 22, 32, 15, 16, 17),$ $( 19, 11, 21, 13, 32, 15, 16, 17),$ $( 19, 11, 30, 13, 14, 15, 16, 17),$
 $( 19, 20, 12, 13, 32, 15, 16, 17),$ $( 28, 11, 12, 13, 14, 15, 16, 17),$ $( 10, 20, 30, 13, 14, 15, 16, 17),$ $( 10, 11, 12, 13, 14, 24, 16, 17),$
 $( 19, 29, 12, 13, 14, 15, 16, 17),$ $( 10, 11, 12, 13, 14, 15, 25, 17),$ $( 10, 11, 21, 22, 32, 15, 16, 17)$ and $( 19, 20, 12, 31, 14, 15, 16, 17)$,
\end{center}
and then, we have a decomposition into the above $9$-irreducible numerical semigroups of $S$. However, if we apply Algorithm \ref{alg:final} we obtain these $12$ minimal $9$-irreducible oversemigroups of $S$:
\begin{itemize}
\item $P( 10, 20, 21, 31, 14, 15, 16, 17)=\{22\}$,
\item $P( 10, 20, 12, 22, 32, 15, 16, 17)=\{23\}$,
\item $P( 19, 11, 21, 13, 32, 15, 16, 17)=\{23\}$,
\item $P( 19, 11, 30, 13, 14, 15, 16, 17)=\{21\}$,
\item $P( 19, 20, 12, 13, 32, 15, 16, 17)=\{23\}$,
\item $P( 28, 11, 12, 13, 14, 15, 16, 17)=\{19\}$,
\item $P( 10, 20, 30, 13, 14, 15, 16, 17)=\{21\}$,
\item $P( 10, 11, 12, 13, 14, 24, 16, 17)=\{15\}$,
\item $P( 19, 29, 12, 13, 14, 15, 16, 17)=\{20\}$,
\item $P( 10, 11, 12, 13, 14, 15, 25, 17)=\{16\}$,
\item $P( 10, 11, 21, 22, 32, 15, 16, 17)=\{23\}$,
\item $P( 19, 20, 12, 31, 14, 15, 16, 17)=\{22\}$.
\end{itemize}
Because $\{ x \in \SG(S): x>9\} =\{15, 16, 19, 20, 21, 22, 23\}$, then $( 10, 11, 12, 13, 14, 24, 16, 17)$,\\
 $( 10, 11, 12, 13, 14, 15, 25, 17)$, $( 28, 11, 12, 13, 14, 15, 16, 17)$, $( 19, 29, 12, 13, 14, 15, 16, 17)$,\\
  $( 19, 11, 30, 13, 14, 15, 16, 17)$, $( 10, 20, 21, 31, 14, 15, 16, 17)$  and  $( 19, 11, 21, 13, 32, 15, 16, 17)$ are enough to decompose $S$ into $9$-irreducible numerical semigroups.

Note that the standard decomposition into irreducible numerical semigroups is given by the coordinates $(9, 18, 19, 12, 13, 22, 31),$ $( 10, 11, 12, 13, 14, 15, 25, 17 ),$ $( 28, 11, 12, 13, 14, 15, 16, 17 ),$ $( 19, 11, 30, 13, 14, 15, 16, 17 ),$
  $( 19, 29, 12, 13, 14, 15, 16, 17 )$ and $( 19, 20, 12, 31, 14, 15, 16, 17 )$, where $(9, 18, 19, 12, 13, 22, 31)$ has multiplicity $8$, and then, it is not a decomposition into $9$-irreducibles.

\end{ex}

\begin{rem}
 Note that analogously to the extension done in this paper for irreducible numerical semigroups when the multiplicity is fixed, we could also extend the notions of symmetry and pseudosymmetry as follows:
\begin{itemize}
 \item $S \in \sem(m)$ is $m$\textit{-symmetric} if $S$ is $m$-irreducible and $\F(S)$ is odd.
 \item $S \in \sem(m)$ is $m$\textit{-pseudosymmetric} if $S$ is $m$-irreducible and $\F(S)$ is even.
\end{itemize}
Moreover, by using Proposition \ref{prop:6}, we can describe, for a given multiplicity $m$, all the $m$-symmetric and $m$-pseudosymmetric numerical semigroups
in terms of the Frobenius number. Denote by ${\rm Symm}(m)$ and ${\rm PSymm}(m)$ the set of $m$-symmetric and $m$-pseudosymmetric numerical semigroups, respectively.
Let $S \in \sem(m)$ and $\F(S)=F$.
\begin{enumerate}
\item If $F=m-1$, then $S \in {\rm Symm}(m)$ (resp. $S \in {\rm PSymm}(m)$) if and only if $S=\{x \in \N: x\ge m\} \cup \{0\}$ and $m$ is even (resp. $m$ is odd).
\item If $m < F <2m$, then $S \in {\rm Symm}(m)$ (resp. $S \in {\rm PSymm}(m)$) if and only if $S=\{x \in \N: x\ge m, x\neq F\} \cup \{0\}$ and $F$ is odd (resp. $F$ is even).
\item If $F>2m$, then $S \in {\rm Symm}(m)$ (resp. $S \in {\rm PSymm}(m)$) if and only if $S$ is symmetric (resp. $S$ is pseudosymmetric).
\end{enumerate}
\end{rem}

\end{document}